\documentclass[reqno,10pt]{amsart}  

\usepackage{amsmath,amsfonts,amssymb,amsthm,epsfig,amscd,}
\usepackage[]{fontenc}
\usepackage[latin1]{inputenc}
\usepackage{enumerate}
\usepackage[cmtip,all]{xy}
\usepackage{tabularx,multirow}
\usepackage{color}
\usepackage{url}
\usepackage{color}
\usepackage{tikz-cd}
\voffset=-2cm \textheight=24,5cm \hoffset=-.7cm \textwidth=18cm
\oddsidemargin=.08cm \evensidemargin=-.08cm \footskip=35pt
\linespread{1.10}
\parindent=20pt

\sloppy \allowdisplaybreaks \numberwithin{equation}{section}

\newtheorem{theorem}{Theorem}[section]
\newtheorem*{theorem*}{Main Theorem}

\newtheorem{proposition}[theorem]{Proposition}
\newtheorem{corollary}[theorem]{Corollary}
\newtheorem{lemma}[theorem]{Lemma}

\newtheorem{claim}[theorem]{Claim}

\theoremstyle{definition}

\theoremstyle{remark}
\newtheorem{remark}[theorem]{Remark}

\newcommand{\Pp}{\mathbb{P}}

\def\geq{{\geqslant}}

\begin{document}

\title[On  Hilbert schemes of curves]{On some components of Hilbert schemes of curves}

\thanks{This collaboration has benefitted of funding from the MIUR Excellence Department Project awarded to the Department of Mathematics,
University of Rome Tor Vergata (CUP: E83-C18000100006) and from the MIUR Excellence Department Project awarded to the Department of Mathematics and Physics, University Roma Tre. Both authors are members of INdAM--GNSAGA}

\author{Flaminio Flamini, Paola Supino}

\noindent
\address{F. Flamini, Dipartimento  di Matematica, Universit\`{a}  degli Studi di Roma ``Tor Vergata", Viale della Ricerca Scientifica 1, 00133 Roma -- Italy}
\email{flamini@mat.uniroma2.it}

\noindent
\address{P. Supino, Dipartimento  di Matematica e Fisica, Universit\`{a} degli Studi ``Roma Tre", Largo S. L. Murialdo 1, 00146
Roma -- Italy}
\email{supino@mat.uniroma3.it}

\begin{abstract} Let $\mathcal{I}_{d,g,R}$ be the union of irreducible components of the
Hilbert scheme whose general points parametrize smooth, irreducible, curves of degree $d$, genus $g$, which are non--degenerate
in the projective space $\mathbb{P}^R$. Under some numerical assumptions on $d$, $g$ and $R$, we construct irreducible components of
$\mathcal{I}_{d,g,R}$ other than the so-called {\em distinguished component}, dominating the moduli space
$\mathcal{M}_g$ of smooth genus--$g$ curves,  which are generically smooth and 
turn out to be of dimension higher than the expected one. The general point of any
such a component corresponds to a curve $X \subset \mathbb{P}^R$ which is a suitable ramified $m$--cover of an irrational curve
$Y \subset \mathbb{P}^{R-1}$, $m \geqslant 2$,   lying in a surface cone over $Y$. The paper extends some of the results
in \cite{CIK17, CIK20}.
\end{abstract}

\keywords{Hilbert scheme of curves, Brill--Noether theory, ruled surfaces, cones, coverings, Gaussian--Wahl maps.}

\subjclass[2010]{Primary 14C05; Secondary 14E20, 14F05, 14J10, 14J26, 14H10}

\maketitle


\section*{Introduction} \label{intro} Projective varieties are distributed in {\em families}, obtained by suitably varying
the coefficients of their defining equations. The study of these families and, in particular, of the properties of their parameter spaces
is a central theme in Algebraic Geometry and sets on  technical tools, like {\em flatness}, {\em base change}, etc.,
as well as on the existence  (due to Grothendieck, with refinements by Mumford) of  the so called {\em Hilbert scheme}, a closed,
projective scheme parametrizing closed projective subschemes with fixed numerical/projective invariants (i.e.\,the\,{\em Hilbert polynomial}), 
and having fundamental {\em universal} properties.

Hilbert schemes have interested several authors over the decades, owing also to deep connections with several other subjects in Algebraic Geometry (cf.\;e.g.\;bibliography in \cite{S06} for an overview).  Indeed, results and techniques in the ``projective domain" of the Hilbert schemes have frequently built bridges towards other topics in Algebraic Geometry, as by improving already known results, as by providing new ones. The interplay between Hilbert schemes of curves in projective spaces and the Brill--Noether theory of line bundles on curves is one of the milestone in Algebraic Geometry (cf.\;e.g.\cite{ACGH,CS,Har82}). The construction of the moduli space $\mathcal M_g$ of smooth, genus--$g$ curves (and its generalizations ${\mathcal M}_{g,n}$ of moduli spaces of smooth, $n$--pointed, genus--$g$ curves), the proof of its irreducibility and the construction of a natural compactification of it deeply rely on the use of Hilbert schemes of curves (c.f.\;e.g. \cite{ACG,DM,Har87}).  Similarly,  together with the {\em Deligne--Mumford compactification} of ${\mathcal M}_g$ in \cite{DM}, the use of Hilbert schemes of curves has been also fundamental in the costruction of suitable compactifications of the {\em universal Picard variety} (cf.\;e.g\;\cite[Theorem,\;p.\;592]{Capo}). 

Besides these examples, the use of Hilbert schemes has been fundamental for several other issues in Algebraic Geometry: unirationality and/or Torelli's type of theorems for cubic hypersurfaces and for prime Fano threefolds of given genus have been proved via the use of Hilbert schemes of lines and planes contained in such varieties (cf.\;e.g.\cite{Fano, ClemGrif, BeauDona, Dona, Vois, FlamSern, IlieMark}). Important connections between Hilbert schemes parametrizing $k$--linear  spaces contained in complete intersections of hyperquadrics and intermediate Jacobians (cf.\;\cite{DonaHype}) are worth to be mentioned, whereas in \cite{CCFM08,CCFM09} the Hilbert schemes of projective scroll surfaces have been related with families of rank--$2$ vector-bundles as well as with moduli spaces of (semi)stable ones. Surjectivity of Gaussian--Wahl maps on curves with general moduli (\cite{CHM88,CLM96}) has deep reflections  both on suitable Hilbert schemes of associated cones and on the extendability of such curves (especially in the $K3$--case). At last, Hilbert schemes parametrizing lines in suitable complete intersections are used either in \cite{BCFS19}, to deduce upper--bounds of minimal gonality of a family of curves  covering    a very--general projective hypersurface of high degree, or in \cite{BCFS20, BCFS20b} to deduce new results concerning either enumerative properties or a certain ``algebraic hyperbolicity" behavior.

In the present paper we focus on  Hilbert schemes of smooth, irreducible projective curves of given degree and genus, the study of which is classical and goes back to Castelnuovo, 
Halphen ({\em Casteluovo bounds} and the {\em gap problem}) and Severi. 

Given non-negative integers $d$, $g$ and $R \geqslant 3$, we denote by $\mathcal{I}_{d,g,R}$ the union of all irreducible components of the
Hilbert scheme  whose general points parametrize smooth, irreducible, non--degenerate curves of degree $d$ and genus $g$ in the projective space
$\mathbb{P}^R$. A component of $\mathcal{I}_{d,g,R}$ is said to be {\em regular} if it is generically smooth and of the {\em expected dimension}, otherwise
it is said to be {\em superabundant} (cf.\;\S\;\ref{Sernesi} for more details).

Under suitable numerical assumptions, involving the so called {\em Brill-Noether number}, it is well--known that $\mathcal{I}_{d,g,R}$ has a unique irreducible component
which dominates  the moduli space ${\mathcal M}_g$ parametrizing (isomorphism classes of) smooth, irreducible genus--$g$ curves (cf.\;\cite{Har82} and \S\;\ref{Sernesi} below). {\color{black}This }is called the {\em distinguished component} of the Hilbert scheme.

In \cite{Se21}, Severi claimed the irreducibility of $\mathcal{I}_{d,g,R}$ when $d \geq g+R$, and this was actually proved by Ein for $R=3,\,4$ in
(cf.\,\cite{E86, E87}); further sufficient conditions on $d$ and $g$ ensuring the irreducibility of some $\mathcal{I}_{d,g,R}$ for $R \geqslant 5$ have been found e.g. in \cite{BF}.  
On the other hand, in several cases there have been also given  examples of additional {\em non--distinguished} components of $\mathcal{I}_{d,g,R}$.
Some of these extra components have been constructed by using either $m$--sheeted covers of $\Pp^1$ (cf.\,e.g.\cite{K94}, \cite{MS89}, etc.), or
by using double covers of irrational curves (cf. e.g. \cite{CIK17}, \cite{CIK20}, etc.) or even by using non--linearly normal
curves in projective space (Harris, 1984 unpublished, see e.g.\;\cite[Ch. IV]{CS}).

In this paper we prove the following:

\vskip 5pt

\begin{theorem*} Let $\gamma \geqslant 10$, $e \geqslant 2 \gamma -1$, $R = e - \gamma + 1$ and $m \geqslant 2$ be integers.
Set $$d := me \;\;\; {\rm and} \;\;\; g := m(\gamma-1)+\frac{m(m-1)}{2}e +1.$$

Then $\mathcal{I}_{d,g,R}$ contains an irreducible component which is generically smooth and {\em superabundant}, having dimension
 $$\lambda_{d,g,R} + \sigma_{d,g,R},$$where
$$\lambda_{d,g,R} := (R+1) me  - (R-3) \left( m(\gamma-1)+\frac{m(m-1)}{2}e  \right)$$ is the {\em expected dimension} of $\mathcal{I}_{d,g,R}$ whereas the positive integer
$$\sigma_{d,g,r} := (R-4) \left[(\gamma-1)(m-1) + 1 + e + \frac{m(m-3)}{2}e\right] + 4 (e+1) + e m (m-5)$$is the {\em superabundance summand} for the dimension of such a component.
\end{theorem*} As additional result, we explicitly describe a general point of the aforementioned superabundant component (cf. Proposition \ref{prop:Flam7e} and
\S\;\ref{superabundant} below). We want to stress that Main Theorem extends some of the results in \cite{CIK17, CIK20} which deal with the case $m=2$.

\vskip 5pt

The paper consists of three sections. In Section \ref{Pre} we remind some generalities concerning Hilbert schemes of curves and associated Brill--Noether theory (cf.\S\,\ref{Sernesi}),
Gaussian--Wahl maps and Hilbert schemes of cones (cf.\S\;\ref{Wahlmap}) and ramified coverings of curves (cf.\;\S\;\ref{Coverings}), which will be used for our analysis. Section \ref{curvescones} deals with the construction of curves $X$ which fill--up an open dense subscheme of the superabundant component of $\mathcal{I}_{d,g,R}$ mentioned in Main Theorem above. Precisely in \S\;\ref{curves} we more generally consider, for any $\gamma \geqslant 1$ and $e \geqslant 2\gamma -1$, curves $Y$  of genus $\gamma$, degree $e$ with {\em general moduli}, which are non--special and projectively normal in $\Pp^{R-1}$ and which fill--up the distinguished component of the related Hilbert scheme ${\mathcal I}_{e,\gamma,R-1}$. Then in \S\;\ref{cones} we consider cones $F = F_Y$ extending  in $\Pp^R$ curves $Y$ as above, we describe abstract resolutions of  cones $F$, together with further cohomological properties (see Proposition \ref{prop:Flam7}), as well as an explicit parametric description 
of the parameter space of such cones, as $Y$ varies in the distinguished component of ${\mathcal I}_{e,\gamma,R-1}$. In \S\;\ref{curvesramif} we construct the desired curves $X$ as curves sitting in cones $F$ as   $m$--sheeted ramified covers $\varphi: X \to Y$, where   the map $\varphi $ is given by the projection
from the vertex of the cone. We prove that such curves $X$ are non--degenerate and linearly normal in $\Pp^R$, we moreover compute their genus $g$ and some other
useful cohomological properties (cf. Proposition \ref{prop:Flam7e}). We also prove Lemma \ref{lem:gradi}, a technical result which deals with a more general situation involving 
projections and ramified covers of possibly reducible, connected, nodal curves and which is needed for a certain inductive procedure used in proving Main Theorem (see Lemma 
\ref{lem:SdgR} and the proof of Claim \ref{cl:induction}). Finally, Section \ref{superabundant} focuses on the proof of Main Theorem, which also involves surjectivity of suitable Gaussian--Wahl maps (cf. proof of Claim \ref{cl:induction}). This explains why in this last section, as well as in Main Theorem, the hypothesis $\gamma \geqslant 10$ is required (cf.\;Proposition \ref{prop:2.1(2.8)Y}).


\subsection*{Notation and terminology} We work throughout over the field $\mathbb{C}$ of complex numbers. All schemes will be endowed with the Zariski topology. By \emph{variety}   we mean an integral algebraic scheme and by \emph{curve} we intend a variety of dimension 1. We say that a property holds for a \emph{general}  point $x$ of a variety $X$ if it holds for any point in a Zariski open non--empty subset of $X$. We will  interchangeably use the terms rank-$r$ vector bundle on a variety $X$ and rank-$r$ locally free sheaf. To ease notation and when no confusion arises, we sometimes identify line bundles with Cartier divisors, interchangeably using additive notation instead of multiplicative notation and tensor products; we moreover denote by $\sim$ the linear equivalence of divisors and by $\equiv$ their numerical equivalence. If $\mathcal P$ is either a parameter space of a flat family of closed subschemes of a variety $X$, as e.g. $\mathcal P$ a Hilbert scheme, or a moduli space parametrizing geometric objects modulo a given equivalence relation, as e.g. the moduli space of smooth genus--$g$ curves, we will denote by $[Y]$ the parameter point (resp., the moduli point) corresponding to the subscheme $Y \subset X$ (resp., associated to the equivalence class of $Y$). For non--reminded terminology, we refer the reader to \cite{H}.


\section{Generalities}\label{Pre}
We briefly recall some generalities and results which will be used in the next sections.


\subsection{Hilbert schemes and Brill-Noether theory of curves}\label{Sernesi}

Let $C$ be a smooth, irreducible, projective curve of genus $g > 0$. Given positive integers $d$ and $r$, the
{\em Brill--Noether locus}, $W^r_d(C) \subseteq {\rm Pic}^d(C)$, when not empty, parametrizes degree--$d$
line bundles $L$ on $C$ such that $h^0(C,L) \geqslant r+1$. Its {\em expected dimension} is given by the
so called  {\em Brill-Noether number}
\begin{equation}\label{eq:rho}
\rho(g,r,d) := g - (r+1) ( g + r - d).
\end{equation} 
{\color{black}It is well-known that if } $C$  {\em   has general moduli} (i.e. when $C$ corresponds to a general point of the moduli space
${\mathcal M}_g$ parametrizing isomorphism classes of smooth, genus--$g$ curves) it is well known that $W^r_d(C)$
is empty if $\rho(g,r,d) <0$, whereas it is generically smooth, of the expected dimension $\rho(g,r,d)$, {\color{black}otherwise.}
Moreover, when $\rho(g,r,d) >0$, $W^r_d(C)$ is also irreducible and for a general $L$ parametrized by $W^r_d(C)$ it is 
$h^0(C,L) = r+1$ (cf.\,\cite[Ch.\,IV,\,V,\,VI]{ACGH}).
 
 Brill-Noether theory of line--bundles on abstract projective curves $C$ is intimately related to the study of Hilbert schemes parametrizing projective
embeddings of such curves. Indeed, assume for simplicity $L \in  W^r_d(C)$ very--ample and such that $h^0(C, L) = r+1$; hence
one has an embedding $C \stackrel{\phi_{|L|}}{\hookrightarrow} \mathbb{P}^r$ induced by the complete linear system $|L|$ determined by $L$, whose image
$Y := \phi_{|L|}(C)$ is a smooth, irreducible curve of degree $d$, genus $g$ which is non--degenerate in $\mathbb{P}^r$. If we denote by $Hilb_{d,g,r}$ the Hilbert scheme  parametrizing
closed subschemes of ${\mathbb P}^r$ with Hilbert polynomial $P(t) = d t + (1-g)$, then $Y$ corresponds to a point of $Hilb_{d,g,r}$. If we denote by
$\mathcal{I}_{d,g,r}$ the union of all irreducible components of $Hilb_{d,g,r}$ whose general points parametrize smooth, irreducible, non--degenerate curves in $\Pp^r$, then $Y$
represents a point $[Y] \in \mathcal{I}_{d,g,r}$. When $[Y]$ is a smooth point of  $\mathcal{I}_{d,g,r}$, then
$Y$ is said to be {\em unobstructed} in ${\mathbb P}^r$.

If $N_{Y/{\mathbb P}^r}$ denotes the {\em normal bundle} of $Y$ in ${\mathbb P}^r$, one has
\begin{equation}\label{eq:dimhilb}
T_{[Y]} (\mathcal{I}_{d,g,r}) \cong H^0(Y, N_{Y/{\mathbb P}^r}) \;\;\;\; {\rm and}\;\;\;\;
\chi (Y, N_{Y/{\mathbb P}^r})  \leqslant \dim_{[Y]}\, \mathcal{I}_{d,g,r} \leqslant h^0(Y, N_{Y/{\mathbb P}^r}),
\end{equation} where the integer $\chi (Y, N_{Y/{\mathbb P}^r})=
h^0(Y, N_{Y/{\mathbb P}^r}) - h^1(Y, N_{Y/{\mathbb P}^r})$ in \eqref{eq:dimhilb} is the so--called {\em expected dimension} of
$\mathcal{I}_{d,g,r}$ at $[Y]$ and  the equality on the right--most--side in  \eqref{eq:dimhilb} holds iff
$Y$ is unobstructed in ${\mathbb P}^r$ (for full details, cf.\,e.g.\,\cite[Cor.\,3.2.7, Thm.\,4.3.4, 4.3.5]{S06}).

The expected dimension of $\mathcal{I}_{d,g,r}$, given by $\chi (Y, N_{Y/{\mathbb P}^r})$, can be easily computed with the use of normal and Euler sequences
for $Y \subset \Pp^r$, and it turns out to be
\begin{equation}\label{eq:expdimHilb}
\lambda_{d,g,r}:= \chi (Y, N_{Y/{\mathbb P}^r}) = (r+1) d - (r-3) (g-1).
\end{equation} A component of $\mathcal{I}_{d,g,r}$ is said to be {\em regular} if it is both reduced (i.e. generically smooth) and of the expected dimension $\lambda_{d,g,r}$; otherwise it is said to be {\em superabundant}.

By above, any component $\mathcal I$ of $\mathcal{I}_{d,g,r}$ has a natural rational map $$\mu_g: {\mathcal I} \dasharrow \mathcal M_g,$$ which simply sends $[Y] \in {\mathcal I} $ general 
to the moduli point $[C] \in \mathcal M_g$ as above. The map $\mu_g$ is called the {\em modular morphism} of $\mathcal I$; with same terminology as in
\cite[Introduction]{S84}, the dimension of ${\rm Im}(\mu_g)$ is called the {\em number of moduli of} $\mathcal I$. 

The expected dimension of ${\rm Im}(\mu_g)$ is ${\rm min} \{3g-3, \, 3g-3 + \rho(g,r,d)\}$, where $\rho(g,r,d)$ as in \eqref{eq:rho}, and it is called the {\em expected number of moduli} of $\mathcal I$.
The expression of the expected number of moduli of $\mathcal I$ is the obvious postulation which comes from the well--known interpretation, in terms of maps between vector bundles on Picard scheme, of the existence of special line bundles on $C$ (cf. \cite[Ch. IV, V, VI]{ACGH}). In this set--up, we remind the following result due to Sernesi:

\begin{theorem}\label{thm:Sernesi} (cf. \cite[Theorem, p.26]{S84}) For any integers $r \geqslant 2,\;d$ and $g$ such that
\[
d \geqslant r+1 \;\;\; {\rm and} \;\;\; d-r \leqslant g  \leqslant \frac{r(d-r)-1}{r-1}
\] there exists a component $\mathcal I$ of ${\mathcal I}_{d,g,r}$ which has the {\em expected number of moduli}. Moreover,
$[Y] \in \mathcal I$ general corresponds to an unobstracted  curve $Y \subset \Pp^r$ such that $h^1(Y, N_{Y/{\mathbb P}^r}) = 0$ and whose embedding in $\Pp^r$ is given by a complete
linear system.
\end{theorem}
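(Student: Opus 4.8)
The plan is to reduce the statement to the construction of a single curve and then build that curve by a degeneration--smoothing argument.

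\textbf{Reduction.} It suffices to exhibit one smooth, irreducible, non--degenerate curve $Y\subset\mathbb{P}^r$ of degree $d$ and genus $g$ with: (i) $Y$ linearly normal, i.e. $H^0(\mathbb{P}^r,\mathcal{O}(1))\to H^0(Y,\mathcal{O}_Y(1))$ surjective; (ii) $h^1(Y,N_{Y/\mathbb{P}^r})=0$; (iii) the normalization $C$ of $Y$, together with the $g^r_d$ cut by $|\mathcal{O}_Y(1)|$, moving in a family of the largest possible dimension (so $C$ general in $\mathcal{M}_g$ when $\rho(g,r,d)\geq0$, and general in the relevant Brill--Noether locus of $\mathcal{M}_g$ when $\rho(g,r,d)<0$). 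Indeed, by \eqref{eq:dimhilb} and \eqref{eq:expdimHilb}, (ii) forces $[Y]$ to be a smooth point of $\mathcal{I}_{d,g,r}$ lying on a unique component $\mathcal{I}$, necessarily regular of dimension $\lambda_{d,g,r}$; the general point of $\mathcal{I}$ is then unobstructed, has $h^1$ of its normal bundle zero, and remains linearly normal by semicontinuity of $h^1(\mathcal{O}(1))$. The fibre of the modular map $\mu_g$ through $[Y]$ has dimension $\dim W^r_d(C)+\dim\mathrm{PGL}(r+1)-\dim\mathrm{Aut}(C)$; plugging in $\dim W^r_d(C)=\max\{\rho(g,r,d),0\}$, valid for the $C$ at hand by the Brill--Noether results recalled in \S\ref{Sernesi}, a short computation gives $\dim\mathrm{Im}(\mu_g)=\lambda_{d,g,r}-(r+1)^2+1+\dim\mathrm{Aut}(C)-\max\{\rho,0\}$, which equals $\min\{3g-3,\,3g-3+\rho(g,r,d)\}$, i.e. $\mathcal{I}$ has the expected number of moduli.

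\textbf{Construction.} I would build $Y$ by induction on the degree $d$ and the speciality $s:=g-(d-r)=h^1(Y,\mathcal{O}_Y(1))$. When $s=0$ (so $g=d-r$), $\mathcal{O}_Y(1)$ is non--special, so chasing the Euler and normal sequences of $Y\subset\mathbb{P}^r$ reduces (ii) to $h^1(\mathcal{O}_Y(1))=0$; taking $Y$ to be the image of a general genus--$(d-r)$ curve by a general very ample line bundle of degree $d$ (for $r=2$, a smooth plane curve) yields (i) and (iii) --- these are the curves of the distinguished component. For larger $s$ one starts from a suitable finite list of base cases (non--special curves, canonical curves, and analogous extremal families, for which $h^1(N)=0$ is known or established by a direct argument) and attaches general secant lines: a $2$--secant line raises $(d,g)$ by $(1,1)$ and keeps $s$ fixed, higher--secant lines raise $s$, and one checks that $Y_1:=Y_0\cup\ell$ is a connected nodal curve of the expected degree and arithmetic genus, still linearly normal for $\ell$ general, with $h^1(Y_1,N_{Y_1/\mathbb{P}^r})=0$ --- via the exact sequence relating $N_{Y_1/\mathbb{P}^r}$ to $N_{Y_0/\mathbb{P}^r}(-D)$ and $N_{\ell/\mathbb{P}^r}(-D)\cong\mathcal{O}_\ell(1-\deg D)^{\oplus(r-1)}$, with $D$ the secant divisor --- so that $Y_1$ is smoothable in $\mathbb{P}^r$ by a Hartshorne--Hirschowitz type criterion, a general smoothing $Y_1^t$ being smooth, irreducible, non--degenerate, linearly normal, with $h^1(N_{Y_1^t/\mathbb{P}^r})=0$ by semicontinuity and with one extra modulus in its normalization. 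Iterating the appropriate attachments reaches the target $(d,g,r)$.

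\textbf{Main obstacle.} The delicate part is that the vanishings used in the inductive step are not free: $h^1(N_{\ell/\mathbb{P}^r}(-D))=h^1(\mathcal{O}_\ell(1-\deg D))^{\oplus(r-1)}$ penalizes higher--secant attachments, $h^1(N_{Y_0/\mathbb{P}^r}(-D))=0$ demands that the normal bundle of $Y_0$ stay positive enough, and at every stage one must produce a sufficiently general secant line with the right incidence on $Y_0$ while preserving linear normality. Carrying this bookkeeping through the whole chain is precisely what limits $g$ for fixed $d-r$: the inequality $g\leq\frac{r(d-r)-1}{r-1}$, equivalently $(r-1)\,h^1(\mathcal{O}_Y(1))\leq d-r-1$, is exactly the threshold beyond which no admissible sequence of attachments can simultaneously maintain $h^1(N)=0$ and linear normality. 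Establishing this threshold, together with the step--by--step existence of the required general secant lines, is the heart of the argument; everything else is semicontinuity and the dimension count of the Reduction.
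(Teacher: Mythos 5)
This statement is not proved in the paper at all: it is quoted verbatim from Sernesi's article \cite[Theorem, p.\,26]{S84} and used as a black box, so there is no internal proof to compare your attempt against. What you have written is a sketch of a proof of Sernesi's theorem itself, and your outline does capture the strategy of the actual argument in \cite{S84}: start from curves with non--special hyperplane bundle (the $s=0$ case, where $h^1(N_{Y/\Pp^r})=0$ follows from the Euler and normal sequences), increase degree, genus and speciality by attaching suitably general secant lines to form connected nodal curves, verify $h^1(N)=0$ on the reducible curve via the Mayer--Vietoris--type sequence for $N_{Y_0\cup\ell}$, smooth, and conclude by semicontinuity; the moduli count in your Reduction is also essentially the standard one.

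The gap is that, as written, this is a plan rather than a proof, and you say so yourself in the last paragraph. Every load--bearing step is asserted, not established: (a) the choice of base cases and the fact that they satisfy $h^1(N)=0$; (b) the existence, at each stage, of a $k$--secant line that is general enough to preserve linear normality and to make $h^1\bigl(N_{Y_0/\Pp^r}(-D)\bigr)=0$ --- this last vanishing is a genuine positivity statement about the normal bundle of the partial curve and is where the construction can fail; (c) the smoothing step, since $h^1(N_{Y_1})=0$ alone gives smoothness of the Hilbert scheme at $[Y_1]$ but smoothability of the nodes additionally requires surjectivity of $H^0(N_{Y_1})\to T^1_{Y_1}$, which must be checked; and (d) the central claim that the inequality $g\leqslant\frac{r(d-r)-1}{r-1}$ is exactly the range reachable by admissible chains of attachments --- this \emph{is} the theorem, and declaring it to be ``the threshold'' is circular without the bookkeeping. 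A smaller but real issue in the Reduction: when $\rho(g,r,d)<0$ you need $\dim W^r_d(C)=0$ for the specific curve $C$ you construct, which is not covered by the general--moduli Brill--Noether statements recalled in \S\,\ref{Sernesi} and must be extracted from the construction itself. In short, the architecture is right, but the content of \cite{S84} is precisely the part you have deferred.
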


\begin{remark}\label{rem:Sernesi} {\rm We want to stress the ``geometric counter--part" of the
numerical hypotheses appearing in Theorem \ref{thm:Sernesi}. For $Y$ as in Theorem \ref{thm:Sernesi}, let $(C,L)$ be the pair consisting 
of a smooth, irreducible, abstract projective curve $C$ of genus $g$ and of $L \in {\rm Pic}^d(C)$ such that 
$Y = \phi_{|L|}(C)$. Then, condition $d \geqslant r+1$ simply means that the curve $Y$ is of positive genus and non--degenerate in
$\Pp^r$ whereas $d-r \leqslant g$, i.e. $g + r - d \geqslant 0$, simply decodes by Riemann--Roch the condition that the {\em index of speciality} $i(L) := g+r-d$
of $L$ is non--negative. At last, the condition $g \leqslant \frac{r(d-r)-1}{r-1}$ reads $g - rg + rd - r^2 - 1 \geqslant 0$ which is nothing but
$\rho(g,r,d) + (r+g-d) = \rho(g,r,d) + i(L) \geqslant 1$, i.e. it is a {\em ``Brill-Noether type"} condition on the pair $(C,L)$.}
\end{remark}

It is well known (cf.\,e.g. \cite[p.\,70]{Har82}) that, when $\rho(g,r,d) \geqslant 0$, $\mathcal{I}_{d,g,r}$ has a {\em unique component} with a dominant modular morphism $\mu_g$, i.e. dominating $\mathcal M_g$; thus such a component has maximal number of moduli $3g-3$. It is called the {\em distinguished component} of $\mathcal{I}_{d,g,r}$ and, in the sequel, we will denote it by $\widehat{\mathcal{I}_{d,g,r}}$ or simply by $\widehat{\mathcal{I}}$, if no confusion arises. As a direct consequence of the uniqueness of $\widehat{\mathcal{I}}$
and of Theorem \ref{thm:Sernesi}, one has:

\begin{corollary}\label{cor:Sernesi}  For any integers $r \geqslant 2,\;d$ and $g$ such that
\[
d \geqslant r+1 \;\;\; {\rm and} \;\;\; d-r \leqslant g  \leqslant \frac{(r+1)(d-r)-1}{r}
\] the distinguished component $\widehat{\mathcal{I}}$ of $\mathcal{I}_{d,g,r}$ is not empty. Its general point $[Y]$ corresponds to an unobstructed  curve $Y$ in $\Pp^r$ with $h^1(Y, N_{Y/{\mathbb P}^r}) = 0$ and whose embedding in $\Pp^r$ is given by a complete linear system. Furthermore $\widehat{\mathcal{I}}$ is regular, i.e. generically smooth and of the expected dimension
$\lambda_{d,g,r}$.
\end{corollary}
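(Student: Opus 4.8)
The plan is to derive the statement from Sernesi's Theorem \ref{thm:Sernesi} together with the uniqueness of the distinguished component recalled above, once one verifies that the numerical assumptions made here are a strengthening of those in Theorem \ref{thm:Sernesi}. The first step is a reformulation of the Brill--Noether number: expanding \eqref{eq:rho} gives $\rho(g,r,d) = (r+1)(d-r) - rg$, so the condition $g \leqslant \frac{(r+1)(d-r)-1}{r}$ is equivalent to $rg \leqslant (r+1)(d-r) - 1$, i.e. to $\rho(g,r,d) \geqslant 1$ (in particular $\rho(g,r,d) > 0$), while $d - r \leqslant g$ says exactly that the speciality index $i(L) = g+r-d$ is $\geqslant 0$, as in Remark \ref{rem:Sernesi}. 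An elementary comparison of the two upper bounds — using $d - r \geqslant 1$, which follows from $d \geqslant r+1$, and $r - 1 > 0$ — shows that $\frac{(r+1)(d-r)-1}{r} \leqslant \frac{r(d-r)-1}{r-1}$; hence the hypotheses of Theorem \ref{thm:Sernesi} are satisfied, and that theorem produces a component $\mathcal I \subseteq \mathcal{I}_{d,g,r}$ with the expected number of moduli whose general point $[Y]$ corresponds to an unobstructed curve with $h^1(Y,N_{Y/{\mathbb P}^r}) = 0$ embedded by a complete linear system.

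The second step is to identify $\mathcal I$ with the distinguished component $\widehat{\mathcal I}$. Since $\rho(g,r,d) > 0$, the expected number of moduli $\min\{3g-3,\,3g-3+\rho(g,r,d)\}$ equals $3g-3$, i.e. it is maximal, so the modular morphism $\mu_g$ of $\mathcal I$ is dominant onto $\mathcal M_g$; on the other hand, because $\rho(g,r,d) \geqslant 0$, there is a unique component of $\mathcal{I}_{d,g,r}$ with dominant $\mu_g$ (cf.\,\cite[p.\,70]{Har82}), and this is by definition $\widehat{\mathcal I}$. Therefore $\mathcal I = \widehat{\mathcal I}$; in particular $\widehat{\mathcal I} \neq \emptyset$, and its general point $[Y]$ enjoys all the properties listed above.

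Finally I would read off the remaining assertions: unobstructedness of the general $[Y] \in \widehat{\mathcal I}$ gives generic smoothness, while $h^1(Y,N_{Y/{\mathbb P}^r}) = 0$ together with \eqref{eq:dimhilb} and \eqref{eq:expdimHilb} forces $\dim \widehat{\mathcal I} = h^0(Y,N_{Y/{\mathbb P}^r}) = \chi(Y,N_{Y/{\mathbb P}^r}) = \lambda_{d,g,r}$, so $\widehat{\mathcal I}$ is regular. There is no real obstacle in this argument — the corollary is essentially a repackaging of Theorem \ref{thm:Sernesi} under the more restrictive hypothesis $\rho(g,r,d) > 0$; the only points demanding a little care are the denominator manipulations in the comparison of the two bounds on $g$ and the bookkeeping that identifies Sernesi's component with the distinguished one.
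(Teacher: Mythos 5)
Your proposal is correct and follows essentially the same route as the paper: reduce to Theorem \ref{thm:Sernesi} after observing that the bound on $g$ here is equivalent to $\rho(g,r,d)\geqslant 1$ (and implies Sernesi's bound), then invoke the uniqueness of the component dominating $\mathcal M_g$ from \cite[p.\,70]{Har82} to identify Sernesi's component with $\widehat{\mathcal I}$, and conclude regularity from $h^1(Y,N_{Y/\Pp^r})=0$ via \eqref{eq:dimhilb}. The only cosmetic difference is that you verify Sernesi's hypothesis $g\leqslant\frac{r(d-r)-1}{r-1}$ by a direct comparison of the two upper bounds, whereas the paper routes this through the Brill--Noether reformulation $\rho+i(L)\geqslant 1$ of Remark \ref{rem:Sernesi}; both are immediate and equivalent.
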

\begin{proof} The condition $g  \leqslant \frac{(r+1)(d-r) -1}{r}$ is equivalent to $\rho(g,r,d) \geqslant 1$. Thus we conclude by applying 
Theorem \ref{thm:Sernesi}, taking into account what discuss in Remark \ref{rem:Sernesi}, and by applying \cite[p.\,70]{Har82} and \eqref{eq:dimhilb}, as the condition 
$h^1(Y, N_{Y/{\mathbb P}^r}) =0$ implies both the non--obstructedness of $Y$ in $\Pp^r$ and the regularity of  $\widehat{\mathcal{I}}$.
\end{proof}

In \cite{Se21}, Severi claimed the irreducibility of $\mathcal{I}_{d,g,r}$ when $d \geq g+r$. Severi's claim was proved by Ein for $r=3,\,4$ in
(cf.\,\cite{E86, E87}); further sufficient conditions on $d$ and $g$ ensuring the irreducibility of some $\mathcal{I}_{d,g,R}$ for $R \geqslant 5$ have been found e.g. in \cite{BF}. On the other hand, in several cases there have been also given examples of additional {\em non--distinguished} components of $\mathcal{I}_{d,g,r}$, even in the range $\rho(g,d,r) \geqslant 0$. Some of these extra components have been constructed by using either $m$--sheeted covers of $\Pp^1$ (cf.\,e.g.\cite{K94}, \cite{MS89}, etc.), or
by using double covers of irrational curves (cf. e.g. \cite{CIK17}, \cite{CIK20}, etc.) or even by using non--linearly normal
curves in projective space (the latter approach is contained in a series of examples due to Harris, 1984 unpublished, fully described in e.g. \cite[Ch. IV]{CS}).
In some cases, these extra components have been also proved to be regular
(cf.\;e.g.\;\cite[Ch. IV]{CS}, \cite{CIK20}).

\vskip 15pt


\subsection{Gaussian-Wahl maps and cones}\label{Wahlmap} Let $C$ be a smooth, irreducible projective curve of positive genus $g$ and $L$ be a very--ample line bundle of degree $d$ 
on $C$. Set $Y \subset \Pp^r$ the embedding of $C$ by the complete linear system $|L|$. Let $F_Y$ (equiv., $F_{C,L}$) denote the cone in $\Pp^{r+1}$ over $Y$ with vertex at a point $v \in \Pp^{r+1} \setminus \Pp^r$ (if no confusion arises, in the sequel we simply set $F$).

Fundamental properties of such cones are related to the so--called  {\em Gaussian--Wahl maps} (cf.\,e.g. \cite{CHM88,CLM96,Wa90}), as we will briefly remind. If $\omega_C$ denotes the {\em canonical bundle} of $C$, one sets
$$R(\omega_C, L) := {\rm Ker} \left[H^0(C, \omega_C) \otimes H^0(C,L) \longrightarrow H^0(C, \omega_C \otimes L)\right],$$where the previous map is a
natural multiplication map among global sections. One can consider the map
\begin{equation}\label{eq:Gaussian}
\Phi_{\omega_C, L} : R(\omega_C, L) \to H^0(\omega_C^{\otimes 2} \otimes L),
\end{equation}defined locally by $\Phi_{\omega_C, L}(s \otimes t) := s\,dt - t\,ds$, which is called the {\em Gaussian--Wahl map}. As customary,
one sets
\begin{equation}\label{eq:cork}
\gamma_{C, L} := {\rm cork} (\Phi_{\omega_C, L} ) = \dim \, {\rm Coker} (\Phi_{\omega_C, L} ).
\end{equation}

For reader's convenience we will remind here statement of \cite[Prop.\,2.1]{CLM96}, limiting ourselves to its $(2.8)$--part, which will be used in Section 
\ref{superabundant}; indeed, the full statement
of  \cite[Prop.\,2.1]{CLM96} is quite long, with many exceptions and dwells also on curves with low
genus whereas Section \ref{superabundant} will focus on curves of genus at least $10$. 

\begin{proposition}\label{prop:CLM2.1} (cf. \cite[Proposition 2.1--(2.8)]{CLM96}) Let $ g \geqslant 6$ be an integer. Assume that $C$ is a smooth, projective curve of genus $g$ 
with general moduli and that $L \in {\rm Pic}^d(C)$ is general. Then, $\gamma_{C, L} = 0$ (i.e. $\Phi_{\omega_C, L}$ is surjective) if
\[
d \geqslant \left\{
\begin{array}{cc}
g + 12 & {\rm for} \;\; 6 \leqslant g \leqslant 8\\
g+9 & {\rm for} \;\; g \geqslant 9
\end{array}
\right..
\]
\end{proposition}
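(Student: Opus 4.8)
The statement to prove is Proposition~\ref{prop:CLM2.1}, i.e. surjectivity of the Gaussian--Wahl map $\Phi_{\omega_C,L}$ for a general curve $C$ of genus $g\geq 6$ and a general line bundle $L\in\mathrm{Pic}^d(C)$, under the degree bounds $d\geq g+12$ for $6\leq g\leq 8$ and $d\geq g+9$ for $g\geq 9$. This is precisely part $(2.8)$ of \cite[Proposition~2.1]{CLM96}, so the cleanest thing to do is to \emph{cite} that result; but to give a proof proposal, let me sketch the argument behind it.

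The plan is to proceed by \textbf{degeneration and semicontinuity}. Corank of the Gaussian map is upper-semicontinuous in families, so it suffices to exhibit, for each $g$ in the relevant range and each $d$ at the threshold value (surjectivity at the threshold implies it for all larger $d$, since adding a general point to $L$ only enlarges the source and tends to kill cokernel — one checks this monotonicity separately via the standard exact sequence relating $\Phi_{\omega_C,L}$ and $\Phi_{\omega_C,L(p)}$), \emph{one} smooth curve $C_0$ of genus $g$ with \emph{one} line bundle $L_0$ of degree $d$ for which $\Phi_{\omega_{C_0},L_0}$ is surjective. First I would set up the cohomological reformulation: $\mathrm{cork}(\Phi_{\omega_C,L})$ can be computed as $h^1$ of a twist of the bundle $M_L$ (the kernel of the evaluation $H^0(L)\otimes\mathcal O_C\to L$), or dually as the failure of surjectivity of a multiplication-type map; concretely one uses that $\mathrm{Coker}(\Phi_{\omega_C,L})$ injects into $H^1(C,\Omega^1_{\mathbb P^r}\otimes\mathcal O_C)$-type groups via the first-order deformation interpretation of the cone $F_{C,L}$. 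Second, I would degenerate $C_0$ to a carefully chosen reducible nodal curve — typically $C_0$ specializes to $X\cup_{p}E$ or to a chain/binary tree of lines and lower-genus pieces, or one uses a $g$-nodal rational curve — so that the Gaussian map on the total space can be analyzed component-by-component via a Mayer--Vietoris / normalization sequence.

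The third and central step is the inductive mechanism: one glues a curve on which surjectivity is already known to a rational tail (e.g. adding an elliptic tail raises $g$ by one, or attaching a line raises the degree), controls the kernel $R(\omega,L)$ and the target $H^0(\omega^{\otimes2}\otimes L)$ through the nodes, and checks that the connecting maps are surjective using vanishing of the relevant $H^1$'s — this is where the numerical hypotheses $d\geq g+9$ (resp.\ $g+12$) enter, guaranteeing $L$ is nonspecial and $M_L$ sufficiently positive. The base cases of the induction (small $g$, e.g.\ $g=6,7,8,9$ at the threshold degrees) must be verified directly, and historically these rely on explicit constructions: canonical curves on $K3$ surfaces, or curves on rational surfaces (cones, scrolls, del Pezzo) where the Gaussian map is controlled by the geometry of the ambient surface (a curve lying on a surface $S$ has $\mathrm{cork}\,\Phi\geq 1$ for the canonical map, but for $\Phi_{\omega,L}$ with $L$ of high degree the surface obstruction is overcome).

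The \textbf{main obstacle} is precisely the base of the induction and the bookkeeping of which degenerations are admissible: one must ensure the chosen nodal degenerations are smoothable to a \emph{general} curve with a \emph{general} $L$ (so that the limit line bundle is itself general on each component), and one must track the ranks exactly rather than up to error terms, because the threshold degrees $g+9$ and $g+12$ are sharp and an off-by-one in any cohomology count would break the result. A secondary subtlety is the small-genus exceptions: for $6\leq g\leq 8$ the weaker bound $g+12$ (versus $g+9$) reflects genuine failure of the inductive step at the lower end, so the argument is not uniform and the two ranges must be handled with slightly different degenerations. Since all of this is carried out in full in \cite{CLM96}, in the paper I would simply invoke \cite[Proposition~2.1--(2.8)]{CLM96} and refer the reader there for the details of the degeneration argument outlined above.
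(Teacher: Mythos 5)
Your proposal is correct and matches the paper's treatment: the paper states this proposition purely as a recalled result, citing \cite[Proposition 2.1--(2.8)]{CLM96} with no proof of its own, which is exactly what you ultimately do. Your additional sketch of the degeneration/semicontinuity strategy behind the cited result is a reasonable outline but is not something the paper attempts or needs, so simply invoking the reference suffices here.
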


Gaussian--Wahl maps can be used to compute the dimension of the tangent space to the Hilbert scheme of surfaces in $\Pp^{r+1}$ at points representing cones $F$ as above
(cf.\,e.g. \cite{CLM96}). Indeed, let $\mathcal W$ be any irreducible component of the Hilbert scheme of curves 
${\mathcal I}_{d,g,r}$ and  let
$\mathcal{H}(\mathcal W)$ be the variety which parametrizes the family of cones $F \subset \Pp^{r+1}$  over curves $Y \subset \Pp^r$ representing
points in $\mathcal W$. Then, one has:

\begin{proposition}\label{prop:CLM2.18} (cf. \cite[Cor.\,2.20--(c), Prop.\,2.12--(2.13) and (2.15)]{CLM96}) Set notation and conditions as in Proposition
\ref{prop:CLM2.1}. Let $r = d-g$, $Y\subset \Pp^r$ and $\mathcal W$ be any component of ${\mathcal I}_{d,g,r}$ s.t.
$[Y] \in \mathcal W$ is general.  Then:

\noindent
(i) The Gaussian--Wahl map $\Phi_{\omega_Y, {\mathcal O}_Y(1)}$ is surjective, i.e. $\gamma_{Y, {\mathcal O}_Y(1)} = 0$.

\noindent
(ii) $h^0(Y, N_{Y/\Pp^r} \otimes {\mathcal O}_Y(-1)) = r+1$.

\noindent
(iii) $h^0(Y, N_{Y/\Pp^r} \otimes {\mathcal O}_Y(-j)) = 0$, for any $j \geqslant 2$.

\noindent
(iv) $\mathcal{H}(\mathcal W)$ is a generically smooth component of the Hilbert scheme parametrizing surfaces of degree
$d$ and sectional genus $g$ in $\Pp^{r+1}$.  Moreover,
\begin{equation}\label{eq:dimHW}
\dim \, \mathcal{H}(\mathcal W) = (r+1) (d+1) - (r-3) (g-1) = \lambda_{d,g,r} + (r+1)
\end{equation} and it is generically smooth, i.e. for $[F] \in \mathcal{H}(\mathcal W)$ general the associated cone $F = F_Y$ is unobstructed in $\Pp^{r+1}$.
\end{proposition}

\vskip 15pt

\subsection{Ramified coverings of curves}\label{Coverings}  Let $Y$ be a scheme. A morphism $\varphi: X \to Y$ 
is called a \emph{covering map of degree} $m$ if $\varphi_*\mathcal{O}_X$ is a locally free ${\mathcal O}_Y$--sheaf of rank $m$. A map $\varphi$ is a covering map (or simply {\em a cover}) if and only if it is finite and flat. In particular, if $Y$ is smooth and irreducible and $X$ is Cohen--Macaulay, then every finite, surjective morphism $\varphi: X \to Y$ is
a covering map (cf.\;e.g.\;\cite[p.\;1361]{CE96}).  

When $\varphi: X \to Y$ is a covering map of degree $m$, one has a natural exact sequence
\[
 0 \to \mathcal{O}_Y \xrightarrow{\varphi^{\sharp}} \varphi_{\ast} \mathcal{O}_X \to \mathcal{T}^{\vee}_{\varphi} \to 0 \, ,
\]where $\mathcal{T}^{\vee}_{\varphi} := Coker (\varphi^{\sharp})$ is the so-called \emph{Tschirnhausen bundle} associated to the covering map $\varphi$, which is of rank $m-1$ on $Y$. 
Since $\textrm{Char}\; (\mathbb{C}) = 0$, the trace map $\textrm{tr}\colon\varphi_*\mathcal{O}_X\rightarrow\mathcal{O}_Y$ gives rise to a splitting of the previous exact sequence, so that one has
$\varphi_*\mathcal{O}_X=\mathcal{O}_Y\oplus \mathcal{T}^{\vee}_{\varphi}$ (cf.\,e.g. \cite{CE96, CIK20, DP15, DP16}).

If $X$ and $Y$ are in particular smooth, irreducible curves and $\varphi:X \to Y$ is a covering map of degree $m$, according to \cite[Ex.\, IV.2.6--(d), p. 306]{H}, the {\em branch divisor} $B_{\varphi}$ of $\varphi$ is such that
\begin{equation}\label{eq:branch}
\left(\bigwedge^m (\varphi_* \mathcal{O}_X)\right)^{\otimes 2} \cong \mathcal{O}_Y(-B_{\varphi}).
\end{equation} If moreover $X$ (resp., $Y$) has genus $g$ (resp., $\gamma$) then one has
$\deg B_{\varphi} = - 2 \deg \left(\bigwedge^m (\varphi_* \mathcal{O}_X)\right) = 2(g - 1) - 2m (\gamma - 1)$.
As for the {\em ramification divisor} $R_{\varphi}$ such that $\varphi(R_{\varphi})=B_{\varphi}$, the Riemann--Hurwitz formula gives
\begin{equation}\label{eq:RiemannH}
\omega_X={\varphi}^*(\omega_Y) + \mathcal{O}_X(R_{\varphi}).
\end{equation}

In this set--up, we recall the {\em pinching construction} described in \cite[\S\,3.1]{DP16}. Let $\varphi :X \to Y$ be a degree--$m$ covering map between smooth irreducible curves $X$ and $Y$. Let $Z$ be the reduced, reducible nodal curves $$Z := X \cup Y,$$where $X$ and $Y$ are {\em attached nodally}
at $\delta$ distinct points as follows: let $y_i \in Y$ and $x_i \in X$ be points such that $\varphi(x_i )= y_i$, $1 \leqslant i \leqslant \delta$. Set
$D := \sum_{i=1}^{\delta} y_i$, $\mathcal{O}_D$ the structural sheaf of $D$ and $\mathcal{J}$ the kernel of the map
\[
 \varphi_* \mathcal{O}_X \oplus \mathcal{O}_Y \to  \mathcal{O}_D,
 \]
defined around any $y_i$'s as
\[
 (f,g) \mapsto f(x_i) - g(y_i), \;\; \forall\; \; 1 \leqslant i \leqslant \delta.
 \]Then $\mathcal{J} \subset \varphi_* \mathcal{O}_X \oplus \mathcal{O}_Y$ is an $\mathcal{O}_Y$-subalgebra of $\varphi_* \mathcal{O}_X \oplus \mathcal{O}_Y$
and $\mathrm{Spec}_Y (\mathcal{J}) = Z = X \cup Y$. $D$ is called the {\em set of nodes} of $Z$.  Let $\psi : Z \to Y$ be the natural induced finite and surjective map. Since $Y$ is smooth, irreducible and $Z$ is l.c.i. (so in particular Cohen--Macaulay),  from what reminded above the map 
$\psi$ is a covering map of degree $m+1$. In this set--up, one has the following:

\begin{proposition}\label{prop:split} (cf. \cite[Lemma\,3.2]{DP16}) Let $\varphi :X \to Y$ be a covering map of degree--$m$ between smooth irreducible curves $X$ and $Y$. Let 
$\psi : Z \to Y$ be the covering map of degree $m+1$ induced by the pinching construction whose set of nodes is $D$. Then, the following 
exact sequence of vector bundles on $Y$
 \[
  0 \to \mathcal{T}_{\varphi} \to \mathcal{T}_{\psi} \to \mathcal{O}_Y(D) \to 0
  \]holds, where $\mathcal{T}^{\vee}_{\varphi}$ and $\mathcal{T}^{\vee}_{\psi}$ are the {\em Tschirnhausen bundles} associated to the covering maps $\varphi$ and $\psi$, respectively.
\end{proposition}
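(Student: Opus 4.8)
The plan is to unwind both covering algebras explicitly and compare their Tschirnhausen complements via the pinching construction. Recall that $\psi_*\mathcal{O}_Z = \mathcal{J}$, where $\mathcal{J} \subset \varphi_*\mathcal{O}_X \oplus \mathcal{O}_Y$ is the kernel of the surjection onto $\mathcal{O}_D$ sending $(f,g) \mapsto (f(x_i)-g(y_i))_i$. First I would split off the structure sheaf: using the $\mathcal{O}_Y$-algebra decompositions $\varphi_*\mathcal{O}_X = \mathcal{O}_Y \oplus \mathcal{T}^\vee_\varphi$ and $\varphi_*\mathcal{O}_X \oplus \mathcal{O}_Y = (\mathcal{O}_Y)_{\mathrm{diag}} \oplus \mathcal{T}^\vee_\varphi \oplus \mathcal{O}_Y$, I would check that the diagonal copy of $\mathcal{O}_Y$ lies inside $\mathcal{J}$ (since constants $(c,c)$ are killed by every $f(x_i)-g(y_i)$) and gives exactly the splitting $\psi^\sharp\colon \mathcal{O}_Y \hookrightarrow \psi_*\mathcal{O}_Z$. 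Hence $\mathcal{T}^\vee_\psi = \mathcal{J}/\mathcal{O}_Y$ fits into the exact sequence obtained by quotienting the defining sequence of $\mathcal{J}$ by the diagonal $\mathcal{O}_Y$.

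Concretely, after modding out the diagonal, the map $\varphi_*\mathcal{O}_X \oplus \mathcal{O}_Y \to \mathcal{O}_D$ descends to a map from $\mathcal{T}^\vee_\varphi \oplus \mathcal{O}_Y$ (the non-diagonal summands) onto $\mathcal{O}_D$; its kernel is $\mathcal{T}^\vee_\psi$. Now I would analyze the composite $\mathcal{O}_Y \hookrightarrow \mathcal{T}^\vee_\varphi \oplus \mathcal{O}_Y \to \mathcal{O}_D$: on the $\mathcal{O}_Y$-factor the evaluation is $g \mapsto -g(y_i)$, which is simply (a sign times) the restriction $\mathcal{O}_Y \twoheadrightarrow \mathcal{O}_D$. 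Since $D = \sum y_i$ is reduced and supported at distinct smooth points, this restriction is surjective with kernel $\mathcal{O}_Y(-D)$. It follows that already the $\mathcal{O}_Y$-summand surjects onto $\mathcal{O}_D$, so the kernel $\mathcal{T}^\vee_\psi$ of the full map surjects onto $\mathcal{T}^\vee_\varphi$ (project away the $\mathcal{O}_Y$-coordinate), and the kernel of that surjection is exactly $\{0\}\oplus\mathcal{O}_Y(-D)$. This yields the short exact sequence
\[
0 \to \mathcal{O}_Y(-D) \to \mathcal{T}^\vee_\psi \to \mathcal{T}^\vee_\varphi \to 0,
\]
and dualizing (all sheaves are locally free on the smooth curve $Y$, of ranks $1$, $m$, $m-1$ respectively, consistent with $\psi$ having degree $m+1$) gives
\[
0 \to \mathcal{T}_\varphi \to \mathcal{T}_\psi \to \mathcal{O}_Y(D) \to 0,
\]
which is the assertion.

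The one point requiring genuine care — and the step I expect to be the main obstacle — is verifying that the diagonal $\mathcal{O}_Y \subset \varphi_*\mathcal{O}_X \oplus \mathcal{O}_Y$ really does coincide with the image of $\psi^\sharp$, i.e. that the $\mathcal{O}_Y$-algebra structure on $\mathcal{J} = \psi_*\mathcal{O}_Z$ identifies the structure-sheaf summand with the antidiagonal/diagonal constants rather than with some other rank-one piece; this is where one uses that $Z = \mathrm{Spec}_Y(\mathcal{J})$ with its reduced nodal structure, so that $\psi^\sharp$ factors as $\mathcal{O}_Y \to \varphi_*\mathcal{O}_X \oplus \mathcal{O}_Y$ via $g \mapsto (\varphi^\sharp g, g)$, landing in $\mathcal{J}$ by construction. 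Once this identification is in place the rest is the bookkeeping above. I would also remark that the sequence does not split in general (the splitting of $\varphi_*\mathcal{O}_X$ does not induce one on $\psi_*\mathcal{O}_Z$ because the trace form of $\psi$ degenerates along $D$), but splitting is not claimed, so this is only a sanity check and not part of the proof. Finally I would note local-freeness of $\mathcal{T}_\psi$ follows since $Z$ is l.c.i. hence Cohen–Macaulay and $\psi$ is finite onto the smooth curve $Y$, so $\psi$ is a covering map of degree $m+1$ as already observed, and all three terms of the sequence are vector bundles of the stated ranks.
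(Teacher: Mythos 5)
Your argument is correct: identifying $\psi^{\sharp}(\mathcal{O}_Y)$ with the diagonal inside $\varphi_*\mathcal{O}_X\oplus\mathcal{O}_Y$, observing that the evaluation map already surjects onto $\mathcal{O}_D$ through the $\mathcal{O}_Y$-summand with kernel $\mathcal{O}_Y(-D)$, and then dualizing the resulting sequence $0\to\mathcal{O}_Y(-D)\to\mathcal{T}^{\vee}_{\psi}\to\mathcal{T}^{\vee}_{\varphi}\to 0$ of locally free sheaves on the smooth curve $Y$ is a complete and valid derivation. The paper itself gives no proof of this statement --- it is quoted directly from \cite[Lemma 3.2]{DP16} --- and your argument is essentially the one found there, so there is nothing further to compare.
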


\vskip 15pt


\section{Curves and cones}\label{curvescones} In this section we first construct families of non--special curves $Y$ of any positive genus $\gamma$ and of degree $e\geqslant 2 \gamma -1$ 
in a projective space, which turn out to fill--up the distinguished component $\widehat{\mathcal{I}}$ of the related Hilbert scheme (cf.\;\S\;\ref{curves}). After that, we deal with the family ${\mathcal H} (\widehat{\mathcal{I}})$, as in \S\,\ref{Wahlmap}, which parametrizes
cones extending curves in $\widehat{\mathcal{I}}$, i.e. cones having curves in $\widehat{\mathcal{I}}$ as hyperplane sections. We describe an abstract resolution of a general point of ${\mathcal H} (\widehat{\mathcal{I}})$, and compute $\dim\; {\mathcal H} (\widehat{\mathcal{I}})$ via an explicit parametric description (cf.\;\S\;\ref{cones}). To conclude the section, for cones $F$ parametrized by ${\mathcal H} (\widehat{\mathcal{I}})$ we construct smooth, irreducible curves $X\subset F$, of suitable degree $d$ and genus $g$, which turn out to be $m$--sheeted ramified covers of
curves $Y$ varying in the distinguished component $\widehat{\mathcal{I}}$ (cf.\;\S\;\ref{curvesramif}). 


\subsection{Curves in distinguished components}\label{curves} Let $\gamma >0 $ and $e \geqslant 2 \gamma -1$ be integers. Let $C$ be a smooth, irreducible, projective curve of genus $\gamma$ and let ${\mathcal O}_C(E) \in {\rm Pic}^e(C)$ be a general line bundle. Thus, ${\mathcal O}_C(E)$ is very--ample and non--special (i.e. $h^1(C, {\mathcal O}_C(E))=0$). By Riemann-Roch, we set
\begin{equation}\label{eq:R}
R:= h^0(C, {\mathcal O}_C(E)) = e - \gamma +1,
\end{equation}so that $|{\mathcal O}_C(E)|$ defines an embedding
$C \stackrel{\phi_{|E|}}{\hookrightarrow} \Pp^{R-1}$, whose image we denote from now on by $Y:= \phi_{|E|}(C)$. Taking into account \cite[Thm.\;1]{GL86}, we therefore have:
\begin{equation}\label{eq:Y}
Y \; \mbox{is a smooth, projective curve of genus $\gamma >0$, degree $e \geqslant 2\gamma-1$, which is projectively normal in $\Pp^{R-1}$. }
\end{equation} As in \S\,\ref{Sernesi}, one has in particular that $[Y] \in {\mathcal I}_{e, \gamma, R-1}$.

If we let vary $[C] \in \mathcal M_{\gamma}$ and, for any such
$C$, we let ${\mathcal O}_C(E)$ vary in ${\rm Pic}^e(C)$, the next result shows that the  corresponding curves $Y \subset \Pp^{R-1}$ fill--up the distinguished component $\widehat{\mathcal{I}} := \widehat{\mathcal{I}_{e,\gamma,R-1}}$ which also turns out to be regular.

\begin{proposition}\label{prop:SernesiY} Let $\gamma >0$ and $e \geqslant 2 \gamma -1$ be integers. Let $C$ be a smooth, projective curve of genus $\gamma$ with general moduli, and let
${\mathcal O}_C(E) \in {\rm Pic}^e(C)$ be a general line bundle. Let $Y:= \phi_{|E|}(C) \subset \Pp^{R-1}$, where $R = e - \gamma +1$.
Then, $Y$ is a smooth, irreducible curve of degree $e$ and genus $\gamma$ which is projectively normal in $\Pp^{R-1}$, as an embedding of $C$ via the complete linear system $|E|$, and such that $h^1(Y, N_{Y/\Pp^{R-1}}) =0$. It corresponds to a general point of the
{\em distinguished component} $\widehat{\mathcal{I}} := \widehat{\mathcal{I}_{e,\gamma,R-1}}$ of the Hilbert scheme ${\mathcal I}_{e, \gamma, R-1}$, which is regular of dimension
\begin{equation}\label{eq:lambdae}
\dim \, \widehat{\mathcal{I}} = \lambda_{e,\gamma,R-1} = R e - (R-4) (\gamma -1).
\end{equation}
\end{proposition}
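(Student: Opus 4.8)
The plan is to verify the hypotheses of Corollary \ref{cor:Sernesi} for the triple $(d,g,r) = (e,\gamma,R-1)$ with $R-1 = e-\gamma$, and then combine that output with the projective normality statement \eqref{eq:Y} (from \cite{GL86}) to pin down the distinguished component. First I would set $r = R-1 = e - \gamma$ and translate the two numerical conditions of Corollary \ref{cor:Sernesi}. The condition $d \geqslant r+1$ becomes $e \geqslant e-\gamma+1$, i.e. $\gamma \geqslant 1$, which holds since $\gamma > 0$. The left inequality $d - r \leqslant g$ becomes $\gamma \leqslant \gamma$, an equality, hence satisfied; geometrically (cf. Remark \ref{rem:Sernesi}) this just says the index of speciality $i({\mathcal O}_C(E)) = \gamma + r - d = 0$ is non--negative, consistent with ${\mathcal O}_C(E)$ being non--special. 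The right inequality $g \leqslant \frac{(r+1)(d-r)-1}{r}$ becomes $\gamma \leqslant \frac{(e-\gamma+1)\gamma - 1}{e-\gamma}$; clearing denominators, this is $\gamma(e-\gamma) \leqslant (e-\gamma+1)\gamma - 1$, i.e. $0 \leqslant \gamma - 1$, which again holds since $\gamma \geqslant 1$. Equivalently $\rho(\gamma, r, e) = \rho(\gamma, e-\gamma, e) = \gamma - (e-\gamma+1)(\gamma + (e-\gamma) - e) = \gamma \geqslant 1$, so we are safely in the range where the distinguished component exists and is regular.

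With the hypotheses checked, Corollary \ref{cor:Sernesi} directly gives that $\widehat{\mathcal{I}} := \widehat{\mathcal{I}_{e,\gamma,R-1}}$ is non--empty, that its general point $[Y']$ corresponds to an unobstructed curve $Y' \subset \Pp^{R-1}$ with $h^1(Y', N_{Y'/\Pp^{R-1}}) = 0$ embedded by a complete linear system, and that $\widehat{\mathcal{I}}$ is regular of the expected dimension $\lambda_{e,\gamma,R-1}$. The dimension formula \eqref{eq:lambdae} is then just the specialization of \eqref{eq:expdimHilb}: $\lambda_{e,\gamma,R-1} = ((R-1)+1)e - ((R-1)-3)(\gamma-1) = Re - (R-4)(\gamma-1)$, a one--line substitution. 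What remains is to show that the specific curves $Y = \phi_{|E|}(C)$ built from $C$ with general moduli and ${\mathcal O}_C(E) \in \mathrm{Pic}^e(C)$ general actually \emph{are} the general points of $\widehat{\mathcal{I}}$ — i.e. that they fill up $\widehat{\mathcal{I}}$. For this I would invoke the uniqueness of the distinguished component (\cite[p.\,70]{Har82}, as recalled in \S\,\ref{Sernesi}): since the modular morphism $\mu_\gamma$ restricted to the locus swept out by our $Y$'s dominates $\mathcal M_\gamma$ (as $[C]$ ranges over a dense subset of $\mathcal M_\gamma$ and, for each such $C$, $[{\mathcal O}_C(E)]$ ranges over a dense subset of $\mathrm{Pic}^e(C)$, the $\gamma$--dimensional fiber matching $\rho = \gamma$), this locus must lie in the unique $\mu_\gamma$--dominating component, namely $\widehat{\mathcal{I}}$. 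Projective normality of $Y$ in $\Pp^{R-1}$ is then exactly \eqref{eq:Y}, which follows from \cite[Thm.\,1]{GL86} since $e \geqslant 2\gamma - 1 = 2\gamma - 1$ (the Green--Lazarsfeld bound guaranteeing normal generation for line bundles of degree $\geqslant 2\gamma+1$, and $2\gamma-1$ suffices here because ${\mathcal O}_C(E)$ is general, hence $N_1$ in the terminology of loc.\,cit.); and $h^1(Y, N_{Y/\Pp^{R-1}}) = 0$ plus unobstructedness come from Corollary \ref{cor:Sernesi} applied to the general point of $\widehat{\mathcal{I}}$, which our $Y$ is.

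The only genuinely delicate point — and the step I expect to be the main obstacle to write cleanly rather than mathematically deep — is the dimension count on the locus of our $Y$'s: one must check that varying $C$ in $\mathcal M_\gamma$ ($3\gamma - 3$ parameters), then ${\mathcal O}_C(E)$ in $\mathrm{Pic}^e(C)$ ($\gamma$ parameters), then the choice of projective frame ($\dim \mathrm{PGL}(R) = R^2 - 1$ parameters), and accounting for the $R^2 - 1$--dimensional stabilizer / the automorphisms, yields a family of dimension exactly $\lambda_{e,\gamma,R-1}$, so that this locus is dense in $\widehat{\mathcal{I}}$ rather than a proper subvariety. Concretely: $(3\gamma-3) + \gamma + (R^2-1) - \dim \mathrm{Aut}(C)$, and since $\gamma \geqslant 1$ forces $\dim \mathrm{Aut}(C) = 0$ for general $C$ (with the harmless exception $\gamma = 1$, where the one--dimensional translation automorphisms are absorbed by the $\mathrm{Pic}^e$--count), one checks $3\gamma - 3 + \gamma + R^2 - 1 = Re - (R-4)(\gamma-1)$ using $e = R + \gamma - 1$. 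This is a routine but slightly fiddly identity; alternatively, and more cleanly, one can bypass the count entirely by noting that the general point of $\widehat{\mathcal{I}}$ is, by definition of the modular morphism and $\rho(\gamma,r,e) = \gamma > 0$, an embedding of a general $C \in \mathcal M_\gamma$ by a general $L \in W^0_e(C) = \mathrm{Pic}^e(C)$ via a complete linear system (using that a general such $L$ is very--ample and non--special by the Brill--Noether facts recalled at the start of \S\,\ref{Sernesi} together with $e \geqslant 2\gamma - 1$), which is precisely our $Y$. I would present the argument in this second, count--free form, with the dimension formula \eqref{eq:lambdae} obtained purely from Corollary \ref{cor:Sernesi} and \eqref{eq:expdimHilb}.
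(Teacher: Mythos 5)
Your proposal is correct and follows essentially the same route as the paper: verify the numerical hypotheses of Corollary \ref{cor:Sernesi} for the triple $(e,\gamma,R-1)$ (reducing to $\rho(\gamma,R-1,e)=\gamma\geqslant 1$), invoke \cite[Thm.\,1]{GL86} for very--ampleness, non--speciality and projective normality, and read off regularity and the dimension $\lambda_{e,\gamma,R-1}=Re-(R-4)(\gamma-1)$ from \eqref{eq:expdimHilb}. The only difference is that you spell out, via the dominance of the modular map onto $\mathcal M_\gamma$ and uniqueness of the distinguished component, why these particular $Y$ are \emph{general} points of $\widehat{\mathcal{I}}$ — a point the paper leaves implicit — and this addition is sound.
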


\begin{proof} Numerical assumptions and \cite[Thm.\;1]{GL86} imply that $E$ is very--ample, non--special and that $Y \subset \Pp^{R-1}$ is projectively normal, the equality
$R = e - \gamma +1$ simply following by the non--speciality of $E$ and by Riemann-Roch.

Under our assumptions, numerical hypotheses in Corollary \ref{cor:Sernesi} hold true. Indeed, as explained in Remark \ref{rem:Sernesi} we have the following: since 
$Y \subset \Pp^{R-1}$ is non--degenerate and of positive genus $\gamma$, then condition $e \geqslant R$ is certainly satisfied; concerning condition
$\gamma \geqslant e - (R-1)$, i.e. $i(E) \geqslant 0$,  it certainly holds from the non--speciality of $E$; at last, non--speciality of $E$ gives 
$\rho(\gamma, R-1, e) + i(E) = \rho(\gamma, R-1, e) = \gamma \geqslant 1$, therefore $\gamma \leqslant \frac{R(e - (R-1)) -1}{R-1}$ as in Corollary \ref{cor:Sernesi} 
certainly holds (cf. Remark \ref{rem:Sernesi}).

Thus, by Corollary \ref{cor:Sernesi}, $[Y]$ corresponds to a point in  the distinguished component $\widehat{\mathcal{I}} $ of $\mathcal{I}_{e,\gamma, R-1}$ and is such 
that $h^1(Y, N_{Y/\Pp^{R-1}}) =0$, i.e. $\widehat{\mathcal{I}}$ is generically smooth and of the expected dimension
$\lambda_{e,\gamma,R-1}$ which equals $R e - (R-4) (\gamma -1)$, as it follows from \eqref{eq:expdimHilb}.
\end{proof}

\vskip 15pt


\subsection{Cones extending curves in $\widehat{\mathcal{I}}$}\label{cones}

With notation as in \S\,\ref{Wahlmap}, here we will deal with the family of cones ${\mathcal H} (\widehat{\mathcal{I}})$, where $\widehat{\mathcal{I}} = \widehat{\mathcal{I}_{e,\gamma,R-1}}$
is the distinguished component in Proposition \ref{prop:SernesiY} above.  For $[Y] \in \widehat{\mathcal{I}}$ general, we will denote by $F:= F_Y \subset \Pp^R$ a cone over $Y$ with general vertex $v \in \Pp^R \setminus \Pp^{R-1}$. {\color{black}In order to } describe suitable smooth, abstract resolution of the cones $F$,{\color{black} we recall the following general facts.}

Let $C$ be a smooth, irreducible projective curve of genus $\gamma > 0$ and let ${\mathcal O}_C(E) \in {\rm Pic}^e(C)$ be a general line bundle of degree $e \geqslant 2\gamma -1$.
Consider the rank--two, normalized vector bundle $\mathfrak{F} := \mathcal{O}_C\oplus  \mathcal{O}_C(-E)$ on $C$ and let $S := \mathbb{P}(\mathfrak{F})=
\mathrm{Proj}_C (\mathrm{Sym} (\mathfrak{F}))$ the associated geometrically ruled surface on $C$. One has the structural morphism
$\rho: S \to  C$ such that $\rho^{-1} (p) = f_p$, for any $p \in C$, where $f_p \cong \Pp^1$ denotes the fiber of the ruling of $S$ over the point $p \in C$. A general fiber of the ruling of $S$ will be simply denoted by $f$.

$S$ is endowed with two natural sections, $C_0$ and $C_1$, both isomorphic to $C$, and
such that $C_0\cdot C_1 = 0$, $C_0^2 = - C_1^2 = -e$. The section $C_0$ (resp., $C_1$) corresponds to the exact sequence
\[
0 \to \mathcal{O}_C \to \mathfrak{F} \to \mathcal{O}_C (-E) \to 0 \;\;\; ({\rm resp.,}\;\;\;0 \to \mathcal{O}_C (-E) \to \mathfrak{F} \to \mathcal{O}_C \to 0).
\] Moreover, one has ${\rm Pic}(S) \cong \mathbb{Z}[\mathcal{O}_S(C_0)] \oplus \rho^*({\rm Pic}(C))$ and ${\rm Num} (S) \cong \mathbb{Z} \oplus \mathbb{Z}$ 
(cf.\,e.g. \cite[V.2]{H}). To ease notation, for any $D \in {\rm Div}(C)$, we will simply set $\rho^*(D) := D\,f$.

If $K_S$ (resp., $K_C$) denotes a canonical divisor
of $S$ (resp., of $C$), one has (cf.\,e.g. \cite[V.2]{H}):
\begin{equation}\label{eq:KS}
C_1 \sim C_0 + E \;f \;\;\; {\rm and} \;\;\; K_S \sim - 2 C_0 + (K_C-E)\,f.
\end{equation}

\begin{proposition}\label{prop:Flam7} Let $C$ be a smooth, irreducible projective curve of genus $\gamma > 0$ and ${\mathcal O}_C(E) \in {\rm Pic}^e(C)$ be a general line bundle of degree $e \geqslant 2\gamma -1$. Consider the normalized, rank--two vector bundle $\mathfrak{F} := \mathcal{O}_C\oplus  \mathcal{O}_C(-E)$ on $C$ and let $S := \mathbb{P}(\mathfrak{F})$, together with the natural sections $C_0$ and $C_1$, where $C_1 \sim C_0 + E \;f, \; C_0\cdot C_1 = 0, \; C_0^2 = - C_1^2 = - e$. Then:

\noindent
(i) The linear system $|{\mathcal O_S}(C_1)|$ is base--point--free and not composed with a pencil. It induces a morphism
$$\Psi := \Psi_{|{\mathcal O}_S(C_1)|}: S \to \Pp^R,$$where $R = e - \gamma +1$.

\noindent
(ii) $\Psi$ is an isomorphism, outside the section $C_0 \subset S$, onto its image $F := \Psi(S) \subset \Pp^R$, whereas it contracts $C_0$ at a point
$v \in \Pp^R$.

\noindent
(iii) $F$ is a cone of vertex $v$ over $Y := \Psi(C_1) \cong C$, where
$Y \subset \Pp^{R-1}$ is a hyperplane section of $F$ not passing through $v$, which is smooth, irreducible, non--degenerate,
of degree $e$, genus $\gamma$ and it is also projectively normal in $\Pp^{R-1}$.

\noindent
(iv) The cone $F \subset \Pp^R$ is projectively normal, of degree $\deg F = e$, of sectional genus and speciality  $\gamma$. In particular,
$h^0(F,{\mathcal O}_F(1)) = R+1 = e - \gamma +2$ and $h^1(F, {\mathcal O}_F(1)) = \gamma$.

\noindent
(v) For any $m \geqslant 2$, one has $h^0(F, \mathcal{O}_F(m)) = \frac{m(m+1)}{2}e -m(\gamma-1) +1$.

\end{proposition}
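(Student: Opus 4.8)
The plan is to transport everything to the geometrically ruled surface $S=\mathbb{P}(\mathfrak F)$ and to push forward along its two natural maps: the ruling $\rho\colon S\to C$ and the morphism $\Psi$ of part (i). The computational backbone is that, with $\mathfrak F$ normalised as in the statement, $\mathcal O_S(C_0)=\mathcal O_S(1)$ and $\mathcal O_S(1)|_{C_0}=\mathcal O_C(-E)$, so $\rho_*\mathcal O_S(m)=\Sym^m\mathfrak F=\bigoplus_{j=0}^m\mathcal O_C(-jE)$ and $R^1\rho_*\mathcal O_S(m)=0$ for $m\ge0$; twisting by $\mathcal O_S(mC_1)=\mathcal O_S(mC_0)\otimes\rho^*\mathcal O_C(mE)$ (see \eqref{eq:KS}) gives $\rho_*\mathcal O_S(mC_1)=\bigoplus_{k=0}^m\mathcal O_C(kE)$ and $R^1\rho_*\mathcal O_S(mC_1)=0$. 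Since the Leray spectral sequence for $\rho$ degenerates over the curve $C$, we obtain $H^i(S,\mathcal O_S(mC_1))\cong H^i\bigl(C,\bigoplus_{k=0}^m\mathcal O_C(kE)\bigr)$ for all $i,m\ge0$.

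For (i): by the above $\rho_*\mathcal O_S(C_1)=\mathcal O_C(E)\oplus\mathcal O_C$ is globally generated ($\mathcal O_C(E)$ being very ample) with vanishing $R^1\rho_*$, so $\mathcal O_S(C_1)$ is base-point-free with $h^0=R+1$, and the complete linear system defines $\Psi\colon S\to\mathbb{P}^R$; it is not composed with a pencil because $(C_1)^2=e>0$, so the pull-back of a hyperplane has positive self-intersection and the image cannot be a curve. For (ii): $\mathcal O_S(C_1)|_{C_0}=\mathcal O_C(-E)\otimes\mathcal O_C(E)=\mathcal O_C$ is trivial, hence $\Psi$ contracts $C_0$ to a point $v$. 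To see that $\Psi$ is an isomorphism on $S\setminus C_0$ I would argue in two steps. First, every ruling fibre $f_p$ is mapped isomorphically onto a line through $v$: this is the surjectivity of $H^0(\mathcal O_S(C_1))\to H^0(f_p,\mathcal O_{\mathbb{P}^1}(1))$, which via $0\to\mathcal O_S(C_1-f)\to\mathcal O_S(C_1)\to\mathcal O_{f_p}(1)\to0$ and $\rho_*$ reduces to injectivity of $H^1(\mathcal O_C(-p))\to H^1(\mathcal O_C)$ (an isomorphism, both sides being $\gamma$-dimensional) together with $h^1(\mathcal O_C(E-p))=0$, which holds for general $E$ since then $E-p\not\sim K_C$. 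Second, distinct fibres map to distinct lines, because $\Psi|_{C_1}$ is the embedding $\phi_{|E|}$ of $C$ (Proposition \ref{prop:SernesiY}), which separates points, and $\ell_p\cap\Psi(C_1)=\{\phi_{|E|}(p)\}$. Thus $\Psi$ restricts to a bijective morphism $S\setminus C_0\to F\setminus\{v\}$ of smooth complex varieties, hence an isomorphism.

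Assertion (iii) then follows: $F=\Psi(S)$ is swept by the lines $\ell_p=\Psi(f_p)$, all passing through $v$ and each meeting $Y:=\Psi(C_1)$ at the single point $\phi_{|E|}(p)$, so $F$ is the cone over $Y$ with vertex $v$; and $Y=\Psi(C_1)=H\cap F$ for the hyperplane $H$ with $\Psi^*H\sim C_1$, where $v\notin H$ because $\Psi^{-1}(H)=C_1$ is irreducible and disjoint from $C_0=\Psi^{-1}(v)$; the remaining properties of $Y$ are Proposition \ref{prop:SernesiY}. For (iv)--(v): $F$ is a cone over the projectively normal curve $Y\subset\mathbb{P}^{R-1}$, hence itself projectively normal and normal (its homogeneous coordinate ring is $S(Y)[x_R]$, which is integrally closed); $\deg F=e$ and the sectional genus is $p_a(Y)=\gamma$, the general hyperplane section being projectively equivalent to $Y$. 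Normality of $F$ gives $\Psi_*\mathcal O_S=\mathcal O_F$, so $H^0(F,\mathcal O_F(m))=H^0(S,\mathcal O_S(mC_1))=\bigoplus_{k=0}^m H^0(C,\mathcal O_C(kE))$; since $\deg(kE)=ke\ge2\gamma-1$ for $k\ge1$, each $\mathcal O_C(kE)$ is non-special with $h^0=ke-\gamma+1$, and adding $h^0(\mathcal O_C)=1$ yields $h^0(F,\mathcal O_F(m))=\frac{m(m+1)}{2}e-m(\gamma-1)+1$, which is (v) and, for $m=1$, the first equality in (iv). The value $h^1(F,\mathcal O_F(1))$ is then extracted from the Leray spectral sequence for $\Psi$ applied to $\mathcal O_S(C_1)$, starting from $h^1(S,\mathcal O_S(C_1))=h^1(C,\mathcal O_C(E)\oplus\mathcal O_C)=\gamma$.

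I expect two delicate points. The first is the closing step of (ii): separating tangent directions transverse to the fibres is not achieved by a direct cohomological fibre-separation argument, which is why I would invoke the smoothness of $F\setminus\{v\}$ (it is a cone over a smooth curve) and conclude via the fact that a bijective morphism of smooth complex varieties is an isomorphism. The second is the cohomological bookkeeping in (iv)--(v): because the vertex of a cone over a curve of positive genus is not a rational singularity, $R^1\Psi_*\mathcal O_S\ne0$, so although $h^0$ passes freely between $S$ and $F$, the computation of $h^1(F,\mathcal O_F(1))$ has to be run through the Leray spectral sequence for $\Psi$ with due care.
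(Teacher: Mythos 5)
Your treatment of (i), (iii), (v) and of the $h^0$/projective--normality part of (iv) is correct and runs essentially parallel to the paper's: the paper likewise pushes everything down to $C$ via $\rho_*\mathcal{O}_S(mC_1)=\bigoplus_{k=0}^m\mathcal{O}_C(kE)$ (quoting \cite[Ex. V.2.11 and III.8.3]{H}) and deduces projective normality of $F$ from that of $Y$ (quoting \cite{CCFM08}); your identification of the homogeneous coordinate ring with $S(Y)[x_R]$ is an equivalent, slightly more hands--on route. For (ii) you replace the paper's citation of \cite[Prop. 23]{FP05} by a self--contained argument (each fibre embeds as a line; distinct fibres give distinct lines because each line meets the hyperplane $H\supset Y$ with $v\notin H$ in a single point; then a bijective morphism onto the normal surface $F\setminus\{v\}$ is an isomorphism). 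That argument is sound.

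The one genuine gap is exactly the point you flagged: you never prove $h^1(F,\mathcal{O}_F(1))=\gamma$, deferring it to ``the Leray spectral sequence with due care.'' Your instinct that this step is dangerous is right, and in fact the Leray computation does not return $\gamma$. In the five--term sequence $0\to H^1(F,\mathcal{O}_F(1))\to H^1(S,\mathcal{O}_S(C_1))\to H^0(F,R^1\Psi_*(\Psi^*\mathcal{O}_F(1)))$ the edge map is an isomorphism: by the theorem on formal functions the target is $\varprojlim_n H^1(nC_0,\mathcal{O}_S(C_1)|_{nC_0})\cong H^1(C_0,\mathcal{O}_{C_0})\cong\mathbb{C}^\gamma$ (all transition maps are isomorphisms since $h^1(C,\mathcal{O}_C(nE))=0$ for $n\geqslant 1$), while the restriction $H^1(S,\mathcal{O}_S(C_1))\to H^1(C_0,\mathcal{O}_S(C_1)|_{C_0})=H^1(C_0,\mathcal{O}_{C_0})$ is an isomorphism because $h^1(S,\mathcal{O}_S(Ef))=h^1(C,\mathcal{O}_C(E))=0$ and $h^2(S,\mathcal{O}_S(Ef))=0$. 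Hence $h^1(F,\mathcal{O}_F(1))=0$. A sanity check: for $\gamma=1$, $e=3$ the cone is a cubic surface in $\mathbb{P}^3$ and $h^1(F,\mathcal{O}_F(1))=h^2(\mathbb{P}^3,\mathcal{O}(-2))=0\neq\gamma$. The paper's own proof sidesteps the issue by simply writing $h^1(F,\mathcal{O}_F(1))=h^1(S,\mathcal{O}_S(C_1))$, which is precisely the identification that fails once $R^1\Psi_*\mathcal{O}_S\neq 0$; the number $\gamma$ is the speciality computed on the resolution $S$ (the convention of \cite{CCFM08}), and only in that form does it reappear later in the paper. So this part of the statement cannot be closed as literally written: what you can (and should) record is $h^1(S,\mathcal{O}_S(C_1))=\gamma$, which your Leray--for--$\rho$ computation already yields.
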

\begin{proof}
 (i) From \cite[Ex.\;V.2.11 (a), p.\;386]{H} one deduces that $|{\mathcal O}_S(C_1)|$ is base--point--free and not composed with a pencil. Therefore
$\Psi$ is a morphism and its image is a surface. Now, from \eqref{eq:KS}, we have
$$h^0(S, {\mathcal O}_S(C_1)) = h^0(S, {\mathcal O}_S(C_0 + Ef)) = h^0(C, {\mathcal O}_C (E)) + h^0(C, {\mathcal O}_C) = (e-\gamma +1) +1 = R +1,$$
where the second equality follows from Leray's isomorphism, projection formula and the fact that
$$\rho_*({\mathcal O}_S(C_0 + Ef)) = \rho_*({\mathcal O}_S(C_0 ) \otimes {\mathcal O}_S( Ef)) = {\mathfrak F} \otimes {\mathcal O}_C(E) =
\left(\mathcal{O}_C\oplus  \mathcal{O}_C(-E)\right) \otimes {\mathcal O}_C(E) = \mathcal{O}_C (E) \oplus  \mathcal{O}_C,$$whereas the third and the last
equality follow, respectively, from the fact that $E$ is non--special of degree $e$ on $C$ of genus $\gamma$ and from \eqref{eq:R}.

\vskip 2pt

\noindent
(ii) Since $E$ is very--ample on $C$, \cite[Prop.\;23]{FP05} implies that the morphism $\Psi$ is an isomorphism onto its image $F$ outside
the section $C_0$ of $S$. On the other hand, $C_1 \cdot C_0 = (C_0 + E f) \cdot C_0 = -e +e = 0$, i.e. $C_1$ contracts the section $C_0$ at a point $v \in
\Pp^R$ which is off  $Y = \Psi(C_1)$, the isomorphic image of the section $C_1 \cong C$.

\vskip 2pt

\noindent
(iii) All the fibers of the ruling of $S$ are embedded as lines, as $C_1 \cdot f = 1$. Since $C_0$ is contracted to a point $v$ and since $C_0 \cdot f = 1$, for any fiber $f$ of the ruling of $S$, it follows that any line $\ell := \Psi(f)$ passes through $v$; thus $F = \Psi(S)$ is a cone over $Y$, of vertex $v \in \Pp^R$. From the isomorphism $C_1 \cong C$, one also
deduces $\Psi_{|_{C_1}} \cong \phi_{|\mathcal{O}_C(E)|}$, as the following diagram summarizes:
\begin{equation}\label{eq:diagPaola}
\begin{tikzcd}
S \arrow[r, "\Psi_{|\mathcal{O}_S(C_1)|}"] \arrow[d, "\rho"] & F \subset \mathbb{P}^R \arrow[d,dashed,"\pi_v"]\\
C \;\;\;\; \arrow[r, "\phi_{|\mathcal{O}_C(E)|}"]& \;\;\;\;\; Y\subset \mathbb{P}^{R-1},
\end{tikzcd}
\end{equation} where $\pi_v$ denotes the projection from the vertex point $v$. Since ${\mathcal O}_F(1)$ is induced by ${\mathcal O}_S(C_1)$, it is clear that $Y$ is a hyperplane section of $F$ so $Y \subset \Pp^{R-1}$ is of degree $e$, genus $\gamma$ and it is projectively normal in $\Pp^{R-1}$, as it follows from \cite[Thm.\;1]{GL86}.

\vskip 2pt

\noindent
(iv) One has $\deg\,F = Y^2 = C_1^2 = e$; moreover, since $Y \cong C$ is a hyperplane section, then $F$ has sectional genus $\gamma$. Now,
$h^0(F, {\mathcal O}_F(1)) = h^0(S, {\mathcal O}_S(C_1)) = R+1 = e - \gamma + 2$, as computed in (i); whereas $h^1(F, {\mathcal O}_F(1)) = h^1(S, {\mathcal O}_S(C_1))$ so, from the exact sequence
$$0 \to {\mathcal O}_S \to {\mathcal O}_S(C_1) \stackrel{r_{C_1}}{\longrightarrow} {\mathcal O}_{C_1}(C_1) \cong {\mathcal O}_C(E) \to 0$$one gets
$$h^0(S, {\mathcal O}_S(C_1) ) = R+1,\;\,\;  h^0(C_1, {\mathcal O}_{C_1}(C_1)) = h^0(C, {\mathcal O}_C(E)) = R, \;\;\; h^1(C_1, {\mathcal O}_{C_1}(C_1)) = h^1(C, {\mathcal O}_C(E)) =0,$$as $E$
is non--special. By Leray's isomorphism and projection formula
one  also gets $h^1(S, {\mathcal O}_S ) = h^1(C, {\mathcal O}_C) = \gamma$. Thus the map $H^0(r_{C_1})$, induced in cohomology by the map $r_{C_1}$, is surjective hence, from the above 
exact sequence, one gets $h^1(S, {\mathcal O}_S(C_1)) = \gamma$.

At last, since $F$ has general hyperplane section which is a projectively normal curve in $\Pp^{R-1}$, it follows
that $F$ is projectively normal in $\Pp^R$ (cf.\,e.g.\,\cite[Proof of Lemma 5.7, Rem. 5.8]{CCFM08}.

\vskip 2pt

\noindent
(v) By the very definition of $F$, one has $h^0(F, \mathcal{O}_F(m)) = h^0(S,\mathcal{O}_S(m C_1))$. From \cite[Ex. III.8.3, p. 253]{H} it follows that
\begin{equation*}
h^0(S,\mathcal{O}_S(m C_1))=h^0(S,\mathcal{O}_S(m C_0 + mEf))=\sum_{k=0}^m h^0(C, \mathcal{O}_C ((m-k)E)) = \sum_{j=0}^m h^0(C, \mathcal{O}_C (j E)).
\end{equation*}Since $E$ is non--special on $C$, so it is any divisor $j E$, for any integer $1 \leqslant j \leqslant m$. Therefore, by Riemann--Roch on $C$, one has
$$h^0(F, \mathcal{O}_F(m)) = \sum_{j=0}^m h^0(C, \mathcal{O}_C (j E)) =  1 + \left[1 +2+3 + \ldots + (m-1) +m\right] e - m\gamma + m = \frac{m(m+1)}{2}e -m(\gamma-1) + 1,$$
as stated.

\end{proof}

Proposition \ref{prop:Flam7} allows to give an explicit {\em parametric description} of the family of cones
${\mathcal H} (\widehat{\mathcal{I}})$, where $\widehat{\mathcal{I}} := \widehat{\mathcal{I}_{e,\gamma,R-1}}$ is
the distinguished component of the Hilbert scheme $\mathcal{I}_{e,\gamma,R-1}$ as in Proposition \ref{prop:SernesiY}. For reader's convenience, we first report here a special case of \cite[Lemma\,6.3]{CCFM09}, which is needed for the parametric description of ${\mathcal H} (\widehat{\mathcal{I}})$.

\begin{lemma}\label{lem:step2} With notation and assumptions as in Proposition \ref{prop:Flam7}, assume further
that ${\rm Aut}(C) = \{Id\}$ (this, in particular, happens when e.g. $C$ has general moduli). Let $G_F \subset {\rm PGL} (R +1, \mathbb{C})$ denote the {\em projective stabilizer} of $F$, i.e.
the sub-group of projectivities of $\Pp^{R}$ which fix $F$ as a cone.

Then $G_F \cong {\rm Aut}(S)$ and $\dim \, G_S  = h^0(C, {\mathcal O}_C(E)) + 1 = R+1 = e - \gamma +2$.
\end{lemma}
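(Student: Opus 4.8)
The plan is to identify $G_F$ with $\mathrm{Aut}(S)$ by means of the contraction morphism $\Psi$ of Proposition \ref{prop:Flam7}, and then to compute $\dim \mathrm{Aut}(S)$ from the relative tangent sequence of $\rho\colon S\to C$. By Proposition \ref{prop:Flam7}, $F$ is projectively normal, $\Psi$ contracts exactly the section $C_0$ to the vertex $v$ and is an isomorphism elsewhere; hence $\mathrm{Sing}(F)=\{v\}$ and $\Psi\colon S\to F$ is the minimal desingularisation. Thus every $g\in\mathrm{PGL}(R+1,\mathbb C)$ fixing $F$ fixes the point $v$ and lifts uniquely to an automorphism of $S$, giving an injective homomorphism $G_F\hookrightarrow\mathrm{Aut}(S)$. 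Conversely, since $C_0^2=-e<0$ the section $C_0$ is the unique irreducible curve on $S$ of negative self-intersection (cf. \cite[V.2]{H}), so every $\tau\in\mathrm{Aut}(S)$ preserves $C_0$ and therefore descends to an automorphism of $F$ as an abstract variety; the only thing to check is that it descends to a \emph{projectivity}, equivalently that $\tau^*\mathcal O_S(C_1)\cong\mathcal O_S(C_1)$ (recall $\Psi^*\mathcal O_{\mathbb P^R}(1)=\mathcal O_S(C_1)$), and this is precisely where $\mathrm{Aut}(C)=\{Id\}$ enters.

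For this, since $\gamma\geq 1$ the ruling $\rho\colon S\to C$ is canonical — it is the Stein factorisation of the Albanese map of $S$; equivalently $\tau^*[f]=[f]$, because $\tau^*$ fixes $[C_0]$ and $K_S\equiv -2C_0+(2\gamma-2-e)f$ with $2\gamma-2-e\neq 0$ by \eqref{eq:KS}. Hence $\tau$ induces $\bar\tau\in\mathrm{Aut}(C)=\{Id\}$, so $\tau^*\rho^*L\cong\rho^*L$ for every $L\in\mathrm{Pic}(C)$; moreover $\tau|_{C_0}\in\mathrm{Aut}(C_0)\cong\mathrm{Aut}(C)$ is the identity. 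Writing $\tau^*\mathcal O_S(C_0)\cong\mathcal O_S(C_0)\otimes\rho^*M$ for a suitable $M\in\mathrm{Pic}^0(C)$ and restricting to $C_0$ forces $M\cong\mathcal O_C$; combined with $C_1\sim C_0+Ef$ this gives $\tau^*\mathcal O_S(C_1)\cong\mathcal O_S(C_1)$, so $\tau$ is induced by a projectivity preserving $F$. The two homomorphisms are mutually inverse, whence $G_F\cong\mathrm{Aut}(S)$.

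For the dimension: in characteristic $0$ the automorphism group scheme $\mathrm{Aut}(S)$ is smooth, so $\dim G_F=\dim\mathrm{Aut}(S)=h^0(S,T_S)$. From $0\to T_{S/C}\to T_S\to\rho^*T_C\to 0$ and $h^0(C,T_C)=0$ — which holds since $\mathrm{Aut}(C)=\{Id\}$ forces $\gamma\geq 2$ — one gets $h^0(S,T_S)=h^0(S,T_{S/C})$. By \eqref{eq:KS}, $T_{S/C}=\omega_S^{-1}\otimes\rho^*\omega_C=\mathcal O_S(2C_0+Ef)$, and by the projection formula $\rho_*\mathcal O_S(2C_0+Ef)=\mathrm{Sym}^2\mathfrak F\otimes\mathcal O_C(E)=\mathcal O_C(E)\oplus\mathcal O_C\oplus\mathcal O_C(-E)$; since $\deg E=e\geq 2\gamma-1>0$ the last summand has no global sections, so by \eqref{eq:R}, $h^0(S,T_{S/C})=h^0(C,\mathcal O_C(E))+1=R+1=e-\gamma+2$, as claimed. (The dimension can also be read off from the explicit description of $\mathrm{Aut}(S)$ in terms of $\mathrm{Aut}(\mathfrak F)$, noting that $\mathrm{End}(\mathfrak F)$ has dimension $h^0(C,\mathcal O_C(E))+2$.)

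I expect the main obstacle to be the middle step: verifying that \emph{every} automorphism of $S$ (equivalently of $F$) preserves the line bundle $\mathcal O_S(C_1)$ and not merely its numerical class. This is exactly where the hypothesis $\mathrm{Aut}(C)=\{Id\}$ is indispensable, both to trivialise the induced action on the base curve $C$ and, via $\tau|_{C_0}=Id$, to rule out a twist of $\mathcal O_S(C_0)$ by a nontrivial degree-$0$ line bundle pulled back from $C$. Everything else is routine bookkeeping with standard facts on geometrically ruled surfaces.
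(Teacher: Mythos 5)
Your proof is correct. The first half --- identifying $G_F$ with ${\rm Aut}(S)$ via the uniqueness of the negative section $C_0$, the triviality of the induced action on $C_0\cong C$, and the consequent preservation of the complete linear system $|C_1|$ --- is essentially the paper's argument, carried out with a bit more care (you justify why $\tau$ preserves the ruling via the Albanese map, and why $\tau^*\mathcal O_S(C_1)\cong\mathcal O_S(C_1)$ as a line bundle and not merely numerically; the paper passes over this quickly). Where you genuinely diverge is the dimension count: the paper invokes Maruyama's structure theorems for ${\rm Aut}_C(S)$ (\cite[Theorem 2, Lemma 6]{Ma2}) as a black box, whereas you compute $\dim{\rm Aut}(S)=h^0(S,T_S)$ directly, using smoothness of the automorphism group scheme in characteristic $0$, the relative tangent sequence of $\rho$ together with $h^0(C,T_C)=0$ (correctly noting that ${\rm Aut}(C)=\{Id\}$ forces $\gamma\geqslant 2$), and the projection formula $\rho_*\mathcal O_S(2C_0+Ef)=\mathrm{Sym}^2\mathfrak F\otimes\mathcal O_C(E)$. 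Your route is self-contained and avoids the external reference at the cost of a slightly longer cohomological computation; the paper's route is shorter but leans on the classification of automorphisms of ruled surfaces. Both are valid, and your parenthetical cross-check via $\mathrm{End}(\mathfrak F)$ matches Maruyama's description.
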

\begin{proof} There is an obvious inclusion $G_F \hookrightarrow {\rm Aut}(S)$; we want to show that this is actually a group isomorphism.

Let $\sigma \in {\rm Aut}(S)$ be any automorphism of $S$. Since $C_0$ is the unique section of $S$ with negative self--intersection, then
$\sigma (C_0)= C_0$, i.e. $\sigma$ induces an automorphism of $C_0 \cong C$. Assumption ${\rm Aut}(C) = \{Id\}$ implies that 
$\sigma$ fixes $C_0$ pointwise. Now, from the fact that $C_1 \sim C_0 + E\,f$, it follows that 
$\sigma^*(C_1) \sim \sigma^*(C_0) + \sigma^*(E\,f) = C_0 + E\,f \sim C_1$. Therefore, since $|C_1|$ corresponds to the hyperplane linear system of $F = \Psi(S)$,
one deduces that any automorphism $\sigma \in {\rm Aut}(S)$ is induced by a projective transformation of $F$.

The rest of the proof directly follows from cases \cite[Theorem 2--(2) and (3)]{Ma2} and from \cite[Lemma 6]{Ma2}: indeed condition
${\rm Aut}(C) = \{Id\}$ implies that ${\rm Aut}(S) \cong {\rm Aut}_C(S)$; furthermore, since $C_0$ is the unique section of negative self--intersection on $S$,
$\dim\,G_S  = h^0(C, {\mathcal O}_C(E)) + 1$ follows by using the description of ${\rm Aut}_C(S)$ in \cite[Theorem 2]{Ma2}.
\end{proof}

With the use of Proposition \ref{prop:Flam7} and of Lemma \ref{lem:step2}, one can explicitly describe the family of cones
${\mathcal H} (\widehat{\mathcal{I}})$, where $\widehat{\mathcal{I}}$ the distinguished component in Proposition \ref{prop:SernesiY}, and also 
compute its dimension.

\vskip 15pt

\noindent
{\bf Parametric description of ${\mathcal H} (\widehat{\mathcal{I}})$}: letting $[C]$ vary in ${\mathcal M}_{\gamma}$ and, for any such 
$C$, letting ${\mathcal O}_C(E) $ vary in ${\rm Pic}^e(C)$, cones $F$ arising as in Proposition \ref{prop:Flam7} 
fill-up the component ${\mathcal H} (\widehat{\mathcal{I}})$, which depends on the following parameters:
\begin{itemize}
\item $3\gamma -3$, since $[C]$ varies in ${\mathcal M}_{\gamma}$, plus
\item $\gamma$, which are the parameters on which ${\mathcal O}_C(E) \in {\rm Pic}^e(C)$ depends, plus
\item $(R+1)^2 -1 = \dim \; {\rm PGL}(R+1, \mathbb{C}),$ minus
\item $\dim\,G_F$, which is the dimension of the projectivities of $\Pp^{R}$ fixing a
general $F$ arising from this construction.
\end{itemize} From Lemma \ref{lem:step2} it follows that $\dim\,G_F = R+1$, so 
\begin{equation}\label{eq:dimZbis}
\dim\;{\mathcal H} (\widehat{\mathcal{I}}) = 4\gamma - 3 + (R+1)^2 - (R+2).
\end{equation} From Proposition \ref{prop:SernesiY} we know that $[Y] \in \widehat{\mathcal{I}}$ general has $h^1(Y, N_{Y/\Pp^{R-1}}) =0$; moreover, since 
$R = e - \gamma +1$, it is a straightforward computation to notice that \eqref{eq:dimZbis} equals the expression in \eqref{eq:dimHW} with the choice 
${\mathcal W} = \widehat{\mathcal{I}}$, $r = R-1$, $d = e$ and $g = \gamma$, namely 
\begin{equation}\label{eq:dimZ}
\dim\;{\mathcal H} (\widehat{\mathcal{I}}) = R (e+1) - (R-4) (\gamma-1) = \lambda_{e,\gamma,R-1} +R. 
\end{equation}

\begin{remark}\label{rem:parametric} The previous parametric description of ${\mathcal H} (\widehat{\mathcal{I}})$ can be formalized by taking into account the schematic construction of 
${\mathcal H} (\widehat{\mathcal{I}})$ which deals with universal Picard varieties over $\mathcal M_{\gamma}$.  To do so, 
we follow procedure as in \cite[\S\,2]{Ciro}.  Let ${\mathcal M}^0_{\gamma}$ be the Zariski open subset of the moduli
space ${\mathcal M}_{\gamma}$, whose points correspond to isomorphism
classes of curves of genus $g$ without non-trivial automorphisms. By definition, ${\mathcal M}^0_{\gamma}$ is a fine moduli space, i.e.\ it has 
a universal family $p : {\mathcal C} \to {\mathcal M}^0_{\gamma}$,
where ${\mathcal C}$ and ${\mathcal M}^0_{\gamma}$ are smooth schemes and
$p$ is a smooth morphism. ${\mathcal C} $ can be identified with
the Zariski open subset ${\mathcal M}^0_{\gamma,1}$ of the moduli space
${\mathcal M}_{\gamma,1}$ of smooth, $1$--pointed, genus--$\gamma$ curves, whose
points correspond to isomorphism classes of pairs $[(C,x)]$, with $x
\in C$ a point and $C$ a smooth curve of genus $\gamma$ without non-trivial
automorphisms. On  ${\mathcal M}^0_{\gamma,1}$ there is again a
universal family $p_1 : {\mathcal C}_1 \to {\mathcal M}^0_{\gamma,1}$,
where ${\mathcal C}_1 = {\mathcal C} \times_{{\mathcal M}^0_{\gamma}}
{\mathcal C}$. The family $p_1$ has a natural regular global section $\delta$ whose image is the diagonal. 
By means of $\delta$, for any integer $k$, we have the {\em universal family of
Picard varieties of order $k$ over} ${\mathcal M}^0_{\gamma,1}$, i.e.\ $$p_1^{(k)} : {\mathcal
Pic}^{(k)} \to {\mathcal M}^0_{\gamma,1}$$(cf.\;\cite[\S\,2]{Ciro}) and, setting $\mathcal Z_k := {\mathcal C}_1
\times_{{\mathcal M}^0_{\gamma,1}}{\mathcal Pic}^{(k)},$ we have a Poincar\`e line-bundle ${\mathcal L}_k$ on $ \mathcal Z_k$ (cf.\;a relative
version of \cite[p. 166-167]{ACGH}). For any closed point $[(C,x)] \in {\mathcal M}^0_{\gamma,1}$, its fibre via $p_1^{(k)}$ is
isomorphic to ${\rm Pic}^{(k)}(C)$.

Take $k = e \geqslant 2\gamma -1$ and let $\pi_2 : \mathcal Z_e \to {\mathcal Pic}^{(e)}$ be the projection onto the second factor. 
For a general point \linebreak $u := [(C,x),{\mathcal O}_C(E)] \in {\mathcal Pic}^{(e)}$, the restriction of ${\mathcal L}_e$ to $\pi_2^{-1}(u)$ is isomorphic to 
${\mathcal O}_C(E) \in {\rm Pic}^e(C)$ for $[(C,x)] \in {\mathcal M}^0_{g,1}$ general;  one has  
$\mathcal E_e := {\mathcal O}_{\mathcal Z} \oplus {\mathcal L}_e$ as a rank--two vector bundle on $\mathcal Z_e$. 

The fibre of $\mathcal E_e$ over $u = [(C,x),{\mathcal O}_C(E)] \in {\mathcal Pic}^{(e)}$ is the rank--two vector bundle 
$\mathfrak{E}_u = \mathfrak{F}_u (E) := {\mathcal O}_C \oplus {\mathcal O}_C(E)$ on $C$, where ${\mathfrak F}_u = {\mathcal O}_C (-E) \oplus {\mathcal O}_C$ as in \S\;\ref{cones} and where 
$[(C,x)] \in {\mathcal M}^0_{\gamma,1}$ is general. Moreover, the 
sheaf $(\pi_2)_*(\mathcal E_e)$ is free of rank $R+1 = e - \gamma +2$ on a suitable dense, open subset $\mathcal U$ of $ {\mathcal Pic}^{(e)} $; therefore, on $ \mathcal U$, we have functions 
$s_0, \ldots, s_R$ such that, for each point $ u \in \mathcal U$, $s_0, \ldots, s_R$ computed at $u = [(C,x),{\mathcal O}_C(E)]$ span the space of sections of the corresponding vector 
bundle $\mathfrak{E}_u= \mathfrak{F}_u (E)$.

There is a natural morphism $$\Psi_{e}: {\mathcal Pic}^{(e)} \times {\rm PGL}(R+1, \mathbb{C}) \to {\rm Hilb}(e,\gamma,R),$$where ${\rm Hilb}(e,\gamma,R)$ denotes the Hilbert scheme of surfaces in ${\mathbb P}^R$ of degree $e$ and sectional genus $\gamma$: given a pair $(u,\omega)$, embed $S_u := \Pp(\mathfrak{E}_u)$ to $\Pp^R$ via the sections $s_0, \ldots, s_R$
computed at $u$, compose with the projectivity $\omega$ and take the image. Since ${\mathcal Pic}^{(e)} \times {\rm PGL}(R+1, \mathbb{C}) $ is irreducible, by Proposition \ref{prop:Flam7}, 
${\mathcal H} (\widehat{\mathcal{I}})$ is the closure of the image of the above map to the Hilbert scheme. By construction,
${\mathcal H} (\widehat{\mathcal{I}})$ dominates ${\mathcal M}_{\gamma}$ and its general point
represents a cone $F \subset \Pp^{R}$ as in Proposition \ref{prop:Flam7}. From the previous construction, for $[F] \in {\mathcal H} (\widehat{\mathcal{I}})$ general, 
one has $\dim \; \Psi^{-1}_{e} ([F]) = \dim G_F + 1$. From  Lemma \ref{lem:step2}, one has $\dim\,G_F = R+1$ so $\dim\;{\mathcal H} (\widehat{\mathcal{I}}) = 4\gamma - 3 + (R+1)^2 - (R+2)$ 
as in \eqref{eq:dimZbis}. 
\end{remark}

\vskip 15pt


\subsection{Curves on cones and ramified coverings}\label{curvesramif} In this section, we construct suitable ramified $m$--covers of \linebreak $Y \subset \Pp^{R-1}$, for $[Y] \in \widehat{\mathcal{I}}$ general in the distinguished component $\widehat{\mathcal{I}}$, with the use of cones $F$ parametrized by ${\mathcal H} (\widehat{\mathcal{I}})$. Our approach extends the strategy used in \cite{CIK20}, which deals with double covers.

Using notation and assumptions as in Proposition \ref{prop:Flam7}, for any integer  $m\geqslant 1$,
let $C_m \in |\mathcal{O}_S(m C_1)|$ be a general member of the linear system on $S$ and let $X_m := \Psi(C_m) \subset F$
denote its image.

\begin{proposition}\label{prop:Flam7e} For any integer $m \geqslant 1$, one has:

\noindent
(i) $X_m$ is a smooth, irreducible curve of degree $\deg\, X_m = me$, which is non--degenerate and linearly normal in $\Pp^R$.

\vskip 3pt

\noindent
(ii) $X_m$ is obtained by the intersection of the cone $F$ with a hypersurface of degree $m$ in $\Pp^R$.

\vskip 3pt

\noindent
(iii) The projection $\pi_v$ from the vertex $v \in F$ gives rise to a morphim  $\varphi_m : X_m \to Y$, which is a degree--$m$ covering map
induced on $X_m$ by the ruling of the cone $F$.

\vskip 3pt

\noindent
(iv) The geometric genus of $X_m$ is
\begin{equation}\label{genusg}
g_m : = g(X_m) = m(\gamma-1)+\frac{m(m-1)}{2}e +1.
\end{equation}

\vskip 3pt

\noindent
(v) For any $j \geqslant m$, the line bundle ${\mathcal O}_{X_m} (j)$ is non--special and such that 
\begin{equation}\label{eq:h0m}
h^0(X_m, {\mathcal O}_{X_m} (j)) = jme - m(\gamma-1) - \frac{m(m-1)}{2}e, \;\; \forall\;\; j \geqslant m.
\end{equation}
\end{proposition}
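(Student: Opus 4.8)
The strategy is to pull everything back to the ruled surface $S = \mathbb{P}(\mathfrak{F})$ via the morphism $\Psi = \Psi_{|\mathcal{O}_S(C_1)|}$ of Proposition \ref{prop:Flam7}, where $C_m \in |\mathcal{O}_S(mC_1)|$ is a general member and $X_m = \Psi(C_m)$. First I would establish (i). Since $C_1$ is base-point-free and not composed with a pencil (Proposition \ref{prop:Flam7}(i)), so is $mC_1$; hence by Bertini the general $C_m$ is smooth away from the base locus, which is empty, so $C_m$ is smooth. Irreducibility of the general $C_m$ follows because $|mC_1|$ is not composed with a pencil and the general member of such a system on a surface is irreducible (or, more directly, because $C_m$ meets the general fiber $f$ in $mC_1 \cdot f = m$ points and one checks connectedness via $h^1(S,\mathcal{O}_S) $ considerations using the sequence $0 \to \mathcal{O}_S \to \mathcal{O}_S(mC_1) \to \mathcal{O}_{C_m}(mC_1) \to 0$ together with $h^1(S,\mathcal{O}_S(mC_1)) $). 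The degree is $\deg X_m = C_m \cdot C_1 = m C_1^2 = me$. Non-degeneracy: if $X_m$ lay in a hyperplane $H$ of $\mathbb{P}^R$, then $C_m \sim \Psi^*H$ would force $mC_1 \sim C_1$, absurd for $m \geq 2$ (and for $m=1$, $X_1 = Y$ is non-degenerate by Proposition \ref{prop:Flam7}(iii)). Linear normality is the statement that $H^0(\mathbb{P}^R,\mathcal{O}(1)) \to H^0(X_m,\mathcal{O}_{X_m}(1))$ is surjective; since $F$ is projectively normal (Proposition \ref{prop:Flam7}(iv)) it suffices to show $H^0(F,\mathcal{O}_F(1)) \to H^0(X_m,\mathcal{O}_{X_m}(1))$ is onto, which follows from the exact sequence $0 \to \mathcal{O}_F(1-m) \to \mathcal{O}_F(1) \to \mathcal{O}_{X_m}(1) \to 0$ once we know $h^1(F,\mathcal{O}_F(1-m)) = 0$ for $m \geq 1$; for $m = 1$ this is $h^1(F,\mathcal{O}_F) = 0$ (clear, $F$ projectively normal over a connected base), and for $m \geq 2$ it is a vanishing of the form $h^1(S,\mathcal{O}_S((1-m)C_1))$ which one handles via the projection formula and Serre duality on $C$.

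Part (ii) is essentially formal: since $F$ is projectively normal and $\mathcal{O}_S(mC_1) = \Psi^*\mathcal{O}_F(m)$, the curve $C_m$ is cut on $S$ by a section of $\mathcal{O}_S(mC_1)$, and such a section is the pullback of a degree-$m$ hypersurface section of $F$ because the restriction map $H^0(\mathbb{P}^R,\mathcal{O}(m)) \to H^0(F,\mathcal{O}_F(m)) \cong H^0(S,\mathcal{O}_S(mC_1))$ is surjective (again projective normality). Part (iii): the composite $X_m = \Psi(C_m) \hookrightarrow F \xrightarrow{\pi_v} Y$ corresponds under $\Psi$ to the restriction $\rho|_{C_m}: C_m \to C \cong Y$ of the ruling, by the commutative diagram \eqref{eq:diagPaola}. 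This map is finite (the fibers are the finite sets $C_m \cap f$), and since $C_m \cdot f = m$ it has degree $m$; being a finite surjective morphism from the smooth curve $C_m$ onto the smooth curve $Y$, it is flat, hence a covering map of degree $m$ in the sense of \S\ref{Coverings}.

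Part (iv) follows from adjunction on $S$: $2g_m - 2 = C_m \cdot (C_m + K_S)$ with $C_m \sim mC_1 \sim m(C_0 + Ef)$ and $K_S \sim -2C_0 + (K_C - E)f$ from \eqref{eq:KS}. Using $C_0^2 = -e$, $C_0 \cdot f = 1$, $f^2 = 0$, $\deg E = e$, $\deg K_C = 2\gamma - 2$, a direct expansion gives $C_m \cdot (C_m + K_S) = 2m(\gamma - 1) + m(m-1)e$, hence $g_m = m(\gamma - 1) + \tfrac{m(m-1)}{2}e + 1$. For (v), I would use the exact sequence $0 \to \mathcal{O}_F(j - m) \to \mathcal{O}_F(j) \to \mathcal{O}_{X_m}(j) \to 0$. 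Non-speciality of $\mathcal{O}_{X_m}(j)$ — i.e.\ $h^1(X_m,\mathcal{O}_{X_m}(j)) = 0$ — reduces to showing $h^2(F,\mathcal{O}_F(j-m)) = 0$, which holds for all $j \geq m$ since $F$ is a projectively normal surface and $\mathcal{O}_F(j-m)$ is globally generated when $j \geq m$ (or one passes to $S$: $h^2(S,\mathcal{O}_S((j-m)C_1)) = 0$ as $(j-m)C_1$ is nef and big for $j > m$ and trivial for $j = m$). Then $h^0(X_m,\mathcal{O}_{X_m}(j)) = \chi(\mathcal{O}_{X_m}(j)) = jme + 1 - g_m = jme - m(\gamma - 1) - \tfrac{m(m-1)}{2}e$ by Riemann--Roch on $X_m$ using $\deg \mathcal{O}_{X_m}(j) = jme$ and the genus formula from (iv).

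**Main obstacle.** The delicate points are the irreducibility of the general $C_m$ (Bertini gives only smoothness away from base locus; one must separately argue connectedness, most cleanly through the cohomology sequence for $0 \to \mathcal{O}_S \to \mathcal{O}_S(mC_1) \to \mathcal{O}_{C_m}(mC_1) \to 0$ and the value $h^1(S,\mathcal{O}_S) = \gamma$ versus $h^1(S,\mathcal{O}_S(mC_1))$) and the package of vanishing statements $h^1(F,\mathcal{O}_F(1-m)) = 0$ and $h^2(F,\mathcal{O}_F(j-m)) = 0$ that underlie linear normality in (i) and non-speciality in (v). These are all routine via the projection formula $\rho_*\mathcal{O}_S(kC_1) = \rho_*\mathcal{O}_S(kC_0 + kEf) = \bigoplus_{i=0}^{k}\mathcal{O}_C(iE)$ together with the non-speciality of every $iE$ (degree $\geq 2\gamma-1$) and Serre duality on $C$, but they must be done carefully to cover the boundary cases $m = 1$ and $j = m$.
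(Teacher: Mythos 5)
Your treatment of (i)--(iv) follows essentially the same route as the paper: the degree and genus computations agree (the paper derives \eqref{genusg} via Riemann--Hurwitz after first computing the ramification divisor ${\mathcal O}_{C_m}(R_{\varphi_m})\cong{\mathcal O}_{C_m}((m-1)C_1)$ --- a formula it needs again later --- whereas you use adjunction directly; both are correct), and your linear-normality argument via the vanishing of $h^1$ of the twist $\mathcal{O}_S((1-m)C_1)$ is what the paper does, except that it invokes Kawamata--Viehweg on $\omega_S\otimes\mathcal{O}_S((m-1)C_1)$ rather than the projection--formula computation you sketch. Your remark that irreducibility of the general $C_m$ needs a separate connectedness argument is a fair point which the paper glosses over; the clean version uses $0\to\mathcal{O}_S(-mC_1)\to\mathcal{O}_S\to\mathcal{O}_{C_m}\to 0$ together with $h^1(S,\mathcal{O}_S(-mC_1))=0$ (Serre duality plus Kawamata--Viehweg again), not quite the sequence you wrote.

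The genuine gap is in (v). You claim that $h^1(X_m,\mathcal{O}_{X_m}(j))=0$ ``reduces to showing $h^2(F,\mathcal{O}_F(j-m))=0$.'' It does not: the long exact sequence of $0\to\mathcal{O}_F(j-m)\to\mathcal{O}_F(j)\to\mathcal{O}_{X_m}(j)\to 0$ reads
\[
H^1(F,\mathcal{O}_F(j))\to H^1(X_m,\mathcal{O}_{X_m}(j))\to H^2(F,\mathcal{O}_F(j-m)),
\]
and the left-hand group is \emph{not} zero: Proposition \ref{prop:Flam7}(iv) gives $h^1(F,\mathcal{O}_F(1))=\gamma$, and on the resolution one has $h^1(S,\mathcal{O}_S(jC_1))=h^1\bigl(C,\bigoplus_{k=0}^{j}\mathcal{O}_C(kE)\bigr)=h^1(C,\mathcal{O}_C)=\gamma$ for every $j\geqslant 0$, since only the summand $k=0$ is special. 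Hence killing $h^2(F,\mathcal{O}_F(j-m))$ only bounds $h^1(X_m,\mathcal{O}_{X_m}(j))$ by $\gamma$; to conclude you would additionally need the restriction map on $H^1$ to vanish, which your argument does not address and which is essentially as hard as the statement itself. The correct --- and much simpler --- argument is the paper's pure degree count: $\deg\mathcal{O}_{X_m}(j)=jme$ while $\deg\omega_{X_m}=2m(\gamma-1)+m(m-1)e$, and for $j\geqslant m$ one has $jme\geqslant m^2e=m(m-1)e+me>m(m-1)e+2m(\gamma-1)$ because $e\geqslant 2\gamma-1$; so $\mathcal{O}_{X_m}(j)$ has degree greater than $2g_m-2$, is therefore non-special, and \eqref{eq:h0m} is then Riemann--Roch.
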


\begin{proof} For $m=1$, $X_1 =Y$ as in Proposition \ref{prop:Flam7} and there is nothing else to prove. Therefore, from now on we will focus on $m \geqslant 2$.

\vskip 3pt

\noindent
(i) Since $C_m$ is a smooth, irreducible curve on $S$ and since  $C_m \cdot C_0 = m C_1 \cdot C_0 = 0$,  then   $C_m$ is isomorphically embedded via $\Psi$ onto its image $X_m\subset F \subset \Pp^R$, which does not pass through the vertex $v \in F$.   Moreover,
$\deg\,X_m = C_m \cdot C_1 = m C_1 \cdot C_1 = m C_1^2 = me$.

Tensoring  the exact sequence defining $C_m$ on $S$ by $\mathcal{O}_S (C_1)$, we get
$$0 \to {\mathcal O}_S ((1-m) C_1) \to {\mathcal O}_S(C_1) \stackrel{r_{C_1}}{\longrightarrow} {\mathcal O}_{C_m}(C_1) \cong {\mathcal O}_{X_m}(1) \to 0.$$
Since $m \geqslant 2$, then $h^0({\mathcal O}_S ((1-m) C_1)) = 0$. Moreover, by Serre duality,
$$h^1(S, {\mathcal O}_S ((1-m) C_1)) = h^1(S, \omega_S \otimes {\mathcal O}_S ((m-1) C_1)).$$
From the facts that
$\Psi$ is birational, $C_1^2 = e >0$ and $(m-1) >0$, it follows that
${\mathcal O}_S ((m-1) C_1)$ is big and nef, so $h^1(S, \omega_S \otimes {\mathcal O}_S ((m-1) C_1)) = 0$, by  Kawamata--Viehweg vanishing theorem.
Thus, $$H^0(X_m, {\mathcal O}_{X_m}(1)) \cong H^0(C_m, {\mathcal O}_{C_m}(C_1)) \cong H^0(S, {\mathcal O}_S(C_1))$$which implies that $X_m$ is non--degenerate and linearly normal,
as it follows from Proposition \ref{prop:Flam7}--(iv).

\vskip 3pt

\noindent
(ii) Since $C_m \sim mC_1$ on $S$ and since $C_1$ induces the hyperplane section of $F$, it follows that $X_m \in | {\mathcal O}_{F}(m)|$.

\vskip 3pt

\noindent
(iii) Taking into account diagram \eqref{eq:diagPaola}, the projection from the vertex $v$ induces the morphim  $\varphi_m$. Since
$C_m \cdot f = m C_1 \cdot f = m$ and since any fiber $f$ is embedded by $\Psi$ as a line of $F$, $\varphi_m$ is induced by the ruling of the cone. 
As $Y$ is smooth, irreducible and all the fibers of $\varphi_m$ have constant length $m$, then $\varphi_m$ is a finite, flat morphism from $X_m$ to $Y$ (cf.\;e.g.\;\cite{S06}). 
Therefore, $\varphi_m$ is a covering map of degree $m$ as in \S\,\ref{Coverings}.

\vskip 3pt

\noindent
(iv) The genus of $X_m$ equals the genus of $C_m$. Therefore, to compute $g_m$ we can apply adjunction formula on $S$ and the Riemann-Hurwitz formula as in \eqref{eq:RiemannH}
to the map $\varphi_m: C_m \to C_1$ induced by the fibers of the ruling of $S$ (to ease notation, we use the same symbol as for the
map $\varphi_m : X_m \to Y$ induced by the projection from the vertex $v$ of the cone $F$).

If $R_{\varphi_m}$ denotes the {\em ramification divisor} of $\varphi_m$ on $C_m$, by Riemann--Hurwitz \eqref{eq:RiemannH} one has
$ {\mathcal O}_{C_m}(R_{\varphi_m}) \cong {\mathcal O}_{C_m}(K_{C_m} - \varphi_m^* (K_{C_1}))$. By adjunction formula, for $j = 1,\,m$,  
the canonical divisor $K_{C_j}$ is induced on $C_j$ by the divisor $K_S + C_j$ on $S$, which is  
$$K_S + C_j \sim (j-2) C_0 + (j-1) Ef + K_Cf, \;\;\; j = 1, m.$$Therefore one has 
\begin{equation}\label{eq:ramifdiv}
{\mathcal O}_{C_m} (R_{\varphi_m})\cong {\mathcal O}_{C_m}((m-1) C_1) \;\;\; {\rm and} \;\;\;
\deg\, R_{\varphi_m} = (m-1) C_1 \cdot C_m = m (m-1) C_1^2 = m (m-1)e.
\end{equation}Using Riemann--Hurwitz formula \eqref{eq:RiemannH}, one gets therefore
$$2 g_m - 2 = m (2 \gamma -2) + m (m-1) e$$which gives \eqref{genusg}.

\vskip 3pt

\noindent
(v) Since $\deg\, X_m = me$, then $\deg\; {\mathcal O}_{X_m}(j) = jme$ whereas, from above, $\deg\; \omega_{X_m} = 2 g_m-2 = 2m(\gamma-1) + m (m-1) e$.
Since $e \geqslant 2\gamma -1$, it is a straightforward computation to notice that if $j \geqslant m$ then 
$$\deg\; {\mathcal O}_{X_m}(j) >  \deg\; \omega_{X_m},$$which implies the non--speciality of ${\mathcal O}_{X_m}(j)$ for any $j \geqslant m$.
The computation of $ h^0(X_m, {\mathcal O}_{X_m}(j))$ then reduces to simply apply Riemann--Roch on the curve $X_m$.
\end{proof}

We conclude the section with a general result, involving covering maps and projections, which in particular applies to smooth, irreducible curves $X_m$ on $F$ as above and which extends 
\cite[Lemma\,4,\;Cor.\,5]{CIK20} to the reducible, connected case. This will be used in the proof of our Main Theorem (cf. proof of Claim \ref{cl:induction}).

\begin{lemma} \label{lem:gradi} Let $Z \subset \Pp^R$ be a non--degenerate, connected, projective curve, which is possibly reducible and which has at most nodes as possible singularities. Let
$D =\mathrm{Sing}(Z)$ denote its scheme of nodes, whose cardinality we denote by $\delta$ (in particular $\delta =0$ and $D=\emptyset$ if e.g. $Z$ smooth and irreducible).
Let $H$ be a hyperplane in $\Pp^R$ and  $ v \in \mathbb{P}^R\setminus (H\cup Z)$ be a point.  Assume that the projection $\pi_v : Z \to  H \cong \mathbb{P}^{R-1}$
from the point $v$ is such that $Y:=\pi_v(Z) \subset H$ is smooth and irreducible.  Let $R_{\pi_v}$ be the {\em ramification divisor} of $\pi_v$.

Then  $R_{\pi_v}$ is a Cartier divisor on $Z$ and  the following exact sequence holds
\begin{equation}\label{eq:Flam**}
0 \to \mathcal{L}_Z \to N_{Z/\mathbb{P}^R} \to \pi^*_v(N_{Y/\mathbb{P}^{R-1}}) \to 0,
  \end{equation}
where $N_{Z/\mathbb{P}^R}$ denotes the normal sheaf of $Z$, which is locally free on $Z$, and  $\mathcal{L}_Z$ is a line bundle on $Z$ such that
\[
\deg\,\mathcal{L}_Z =\deg\, Z +\deg \; R_{\pi_v} + \delta.
\]

If, in particular, $Z = X_m$ as in Proposition \ref{prop:Flam7e}, for some $m \geqslant 2$, and $v$ is the vertex of the cone $F=F_Y$ then $\pi_v(X_m) = Y$, where
$Y$ a hyperplane section of $F$ not passing through $v$ as in Proposition \ref{prop:Flam7}, and \eqref{eq:Flam**} reads
\begin{equation}\label{eq:Flam***}
0 \to {\mathcal O}_{X_m} (R_{\pi_v}) \otimes {\mathcal O}_{X_m}(1) \to N_{X_m/\mathbb{P}^R} \to \pi^*_v(N_{Y/\mathbb{P}^{R-1}}) \to 0.
  \end{equation}
\end{lemma}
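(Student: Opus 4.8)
The plan is to build the exact sequence \eqref{eq:Flam**} by comparing the normal sheaf of $Z$ in $\mathbb{P}^R$ with the pullback of the normal sheaf of $Y$ in $\mathbb{P}^{R-1}$, using the projection $\pi_v$. First I would fix the geometry: let $p : \mathrm{Bl}_v\mathbb{P}^R \to \mathbb{P}^R$ be the blow-up at $v$, which carries a morphism $q : \mathrm{Bl}_v\mathbb{P}^R \to H \cong \mathbb{P}^{R-1}$ resolving $\pi_v$; since $v \notin Z$, the strict transform $\widetilde Z$ of $Z$ maps isomorphically to $Z$ under $p$ and the composite $\widetilde Z \to Y$ is exactly $\pi_v$. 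The normal sheaf $N_{Z/\mathbb{P}^R}$ is locally free on $Z$ because $Z$ is a local complete intersection (a nodal curve in $\mathbb{P}^R$ is l.c.i.), which is the first thing to record. To get the line bundle $\mathcal{L}_Z$, I would look at the factorization of $\pi_v$ as the restriction to $Z$ of the linear projection $\mathbb{P}^R \dashrightarrow \mathbb{P}^{R-1}$ and compare tangent/normal data along $Z$; concretely, near $Z$ the rational map $\pi_v$ is a morphism and there is a natural surjection $N_{Z/\mathbb{P}^R} \twoheadrightarrow \pi_v^*(N_{Y/\mathbb{P}^{R-1}})$, whose kernel is a rank-one subsheaf of a locally free sheaf on a curve, hence a line bundle $\mathcal{L}_Z$; this gives \eqref{eq:Flam**} once the surjectivity is checked.

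The core of the argument is the degree computation for $\mathcal{L}_Z$. The plan is to compute first Chern classes: from \eqref{eq:Flam**} one has $\deg \mathcal{L}_Z = \deg N_{Z/\mathbb{P}^R} - \deg \pi_v^*(N_{Y/\mathbb{P}^{R-1}})$, where "degree" of a vector bundle on $Z$ means the degree of its determinant, extended additively over the components of $Z$. For the nodal curve $Z$ I would use that $\det N_{Z/\mathbb{P}^R}$ fits into the standard adjunction-type formula: on a nodal l.c.i. curve $Z \subset \mathbb{P}^R$ one has $\det N_{Z/\mathbb{P}^R} \cong \omega_Z \otimes \mathcal{O}_Z(R+1) $ twisted appropriately, but the clean route is via the Euler sequence pulled back to $Z$ together with $0 \to T_Z \to T_{\mathbb{P}^R}|_Z \to N_{Z/\mathbb{P}^R} \to 0$, which yields $\deg \det N_{Z/\mathbb{P}^R} = (R+1)\deg Z + 2p_a(Z) - 2 - \deg\omega_Z^{\mathrm{sing}}$ — more carefully, $\det T_Z = \omega_Z^\vee$ only up to the contribution of the $\delta$ nodes, and the correction term is exactly $2\delta$ (each node contributes $+2$ when passing between $\omega_Z$ and the determinant of the tangent sheaf, or equivalently between $Z$ and its normalization). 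On the $Y$ side, $\deg \det \pi_v^*(N_{Y/\mathbb{P}^{R-1}}) = m\cdot\deg\det N_{Y/\mathbb{P}^{R-1}}$ where $m=\deg\pi_v$, and $\deg\det N_{Y/\mathbb{P}^{R-1}} = R\deg Y + 2g(Y)-2$ by the same Euler/normal computation for the smooth curve $Y$. Combining these with Riemann–Hurwitz for $\pi_v : Z \to Y$, namely $2p_a(Z)-2 = m(2g(Y)-2) + \deg R_{\pi_v} + (\text{node correction})$, the smooth-curve terms cancel and one is left precisely with $\deg\mathcal{L}_Z = \deg Z + \deg R_{\pi_v} + \delta$. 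I would carry out this bookkeeping on the normalization $\nu : \widetilde Z \to Z$ to keep the node contributions transparent: $p_a(Z) = p_a(\widetilde Z) + \delta$, $\nu^*\omega_Z = \omega_{\widetilde Z}(\text{preimages of nodes})$, and $R_{\pi_v}$ is supported away from the nodes (it is a Cartier divisor on $Z$ since $\pi_v$ is finite flat onto the smooth curve $Y$, so the relative dualizing sheaf $\omega_{Z/Y}$ is a line bundle and $R_{\pi_v}$ is its associated divisor).

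Finally, for the application $Z = X_m$: here $\delta = 0$ since $X_m$ is smooth and irreducible by Proposition \ref{prop:Flam7e}(i), and $\mathcal{O}_{X_m}(1)$ is the hyperplane bundle, so $\mathcal{L}_{X_m}$ is a line bundle of degree $\deg X_m + \deg R_{\pi_v}$. To identify it as $\mathcal{O}_{X_m}(R_{\pi_v}) \otimes \mathcal{O}_{X_m}(1)$ as in \eqref{eq:Flam***}, rather than merely matching degrees, I would use the explicit description of the geometry on the ruled surface $S = \mathbb{P}(\mathfrak{F})$: the kernel $\mathcal{L}_{X_m}$ is intrinsically the "vertical" normal direction of $F$ along $X_m$, i.e.\ the conormal-to-the-ruling contribution, which on $S$ is read off from the inclusion $C_m \subset S$ and the contraction of $C_0$; tracing through diagram \eqref{eq:diagPaola} and the identification $\mathcal{O}_{C_m}(R_{\varphi_m}) \cong \mathcal{O}_{C_m}((m-1)C_1)$ from \eqref{eq:ramifdiv} pins down the bundle on the nose. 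The step I expect to be the main obstacle is the precise handling of the node correction terms in the two degree formulas — making sure the $\delta$ that appears from $p_a(Z)$ versus $p_a(\widetilde Z)$, the one from $\nu^*\omega_Z$ versus $\omega_{\widetilde Z}$, and any Riemann–Hurwitz contribution of $\pi_v$ over the nodes all combine to exactly a single $+\delta$ and not $0$ or $2\delta$; the cleanest safeguard is to do the entire computation on $\widetilde Z$ with $\pi_v \circ \nu : \widetilde Z \to Y$, which is a genuine finite flat covering of smooth curves, and then transport back.
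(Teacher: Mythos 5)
Your overall skeleton is the right one and matches the paper's in outline: $Z$ nodal hence l.c.i., so $N_{Z/\mathbb{P}^R}$ is locally free; the linear projection induces a surjection $N_{Z/\mathbb{P}^R}\twoheadrightarrow \pi_v^*(N_{Y/\mathbb{P}^{R-1}})$ whose kernel, being the kernel of a surjection of locally free sheaves of ranks $R-1$ and $R-2$, is a line bundle. (The surjectivity you defer is obtained in the paper by comparing the two Euler sequences, which give $0\to\mathcal{O}_Z(1)\to T_{\mathbb{P}^R}|_Z\to\pi_v^*(T_{\mathbb{P}^{R-1}}|_Y)\to 0$, with the four-term sequence $0\to\Theta_Z\to T_{\mathbb{P}^R}|_Z\to N_{Z/\mathbb{P}^R}\to T^1_Z\to 0$; it is a routine diagram chase and not the hard point.) Where your route genuinely diverges is the degree computation: you propose $\deg\mathcal{L}_Z=\deg\det N_{Z/\mathbb{P}^R}-\deg\pi_v^*\det N_{Y/\mathbb{P}^{R-1}}$ via adjunction and Riemann--Hurwitz, whereas the paper identifies $\mathcal{L}_Z$ structurally, via two snake-lemma diagrams, as an extension $0\to\mathcal{O}_Z(R_{\pi_v})\otimes\mathcal{O}_Z(1)\to\mathcal{L}_Z\to T^1_Z\cong\mathcal{O}_D\to 0$, so that the ``$+\delta$'' has a transparent origin ($T^1_Z$) and the smooth case of \eqref{eq:Flam***} comes for free.

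The genuine gap is exactly the node bookkeeping you flag, and the two resolutions you float are mutually incompatible and \emph{both give the wrong constant}. Adjunction gives $\deg\det N_{Z/\mathbb{P}^R}=2p_a(Z)-2+(R+1)\deg Z$, so $\deg\mathcal{L}_Z=\deg Z+\bigl(2p_a(Z)-2-m(2g(Y)-2)\bigr)$, and Riemann--Hurwitz on the normalization gives $2p_a(Z)-2-m(2g(Y)-2)=\sum_i\deg R_i+2\delta$, where $R_i$ is the honest ramification of the $i$-th branch. To land on $\deg Z+\deg R_{\pi_v}+\delta$ you therefore need $\deg R_{\pi_v}=\sum_i\deg R_i+\delta$, i.e.\ a definition of the ramification divisor in which \emph{each node contributes exactly $1$} beyond the branch ramification. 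Your first candidate (``$R_{\pi_v}$ is supported away from the nodes'') contributes $0$ per node and yields $+2\delta$; your second (the divisor of the section of $\omega_{Z/Y}$, i.e.\ the different) contributes $2$ per node --- it is in particular \emph{not} supported away from the nodes --- and yields $+0$. The definition that works is the paper's: $\mathcal{O}_{R_{\pi_v}}:=\mathcal{C}oker\bigl(\Theta_Z\to\pi_v^*(T_Y)\bigr)$, and a local computation at a node ($\Theta_Z$ is generated by $x\partial_x,\,y\partial_y$ in $k[[x,y]]/(xy)$, whose image in $\pi_v^*(T_Y)\cong\mathcal{O}_Z$ is the maximal ideal) shows this cokernel has length $1$ there. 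Without fixing this definition and doing that local check, your computation cannot produce the stated formula; with it, your Chern-class route does close, and it is arguably more economical than the paper's diagrams --- but you should then also recover the extension $0\to\mathcal{O}_Z(R_{\pi_v})\otimes\mathcal{O}_Z(1)\to\mathcal{L}_Z\to\mathcal{O}_D\to 0$ (or at least the smooth-case isomorphism $\mathcal{L}_{X_m}\cong\mathcal{O}_{X_m}(R_{\pi_v})\otimes\mathcal{O}_{X_m}(1)$, which follows from the snake lemma applied to $T_{X_m}\hookrightarrow\pi_v^*(T_Y)$ with cokernel $\mathcal{O}_{R_{\pi_v}}$), since \eqref{eq:Flam***} asserts the isomorphism class of the kernel and not merely its degree.
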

\begin{proof} If $\mathcal{I}_{Z/\mathbb{P}^{R}}$ denotes the ideal sheaf of $Z$ in $\Pp^R$ then, since $Z$ is at most nodal, 
$N_{Z/\mathbb{P}^{R}} := {\mathcal Hom}({\mathcal I}_{Z/\Pp^R}, \mathcal{O}_Z)$ and $T_{\mathbb{P}^{R}}|_Z := {\mathcal Hom}(\Omega^1_{\mathbb{P}^{R}}, \mathcal{O}_Z) $ are
both locally--free of rank $R-1$ and $R$, respectively (cf.\;\cite[page\,30]{S84}).

If we take into account the projection $\pi_v: Z \to Y \subset H$, one has $\pi^*_v(\mathcal{O}_Y) \cong \mathcal{O}_Z$ and 
$\pi^*_v(\mathcal{O}_Y(1)) \cong \mathcal{O}_Z(1)$; thus, considering  the Euler sequences of $Z$ and $Y$
\begin{eqnarray}
  &0& \to \mathcal{O}_Z\to \mathcal{O}_Z(1)^{\oplus (R+1)}\to T_{\mathbb{P}^{R}}|_Z\to 0 \nonumber
  \\
  &0& \to \mathcal{O}_Y\to \mathcal{O}_Y(1)^{\oplus R} \to T_{\mathbb{P}^{R-1}}|_Y\to 0 \nonumber
\end{eqnarray} and pulling--back to $Z$ via $\pi_v$ the second Euler sequence, one deduces the  following exact diagram
\\
\[
\begin{tikzcd}
 &                  & 0                 \arrow{d}                 & 0                 \arrow{d}          &
  \\
       & 0 \arrow{d} & \mathcal{O}_Z(1) \arrow{d}  & \mathcal{K}er\,\alpha \arrow{d} & 
  \\
0 \arrow{r} & \mathcal{O}_Z \arrow{r}\arrow[equal]{d}   & \mathcal{O}_Z(1)^{\oplus (R+1)} \arrow{r}\arrow{d}  & T_{\mathbb{P}^{R}}|_Z   \arrow{r}\arrow[d,"\alpha"] & 0
   \\
 0 \arrow{r} & \mathcal{O}_Z \arrow{r}\arrow{d} & \mathcal{O}_Z(1)^{\oplus R} \arrow{r}\arrow{d} & \pi_v^*(T_{\mathbb{P}^{R-1}}|_Y)  \arrow{r}\arrow{d}  & 0
   \\
  & 0                   & 0                                    & 0 
\end{tikzcd}
\\
\]where the map $\alpha$ is surjective by the Snake Lemma. The exactness of the diagram implies that
$\mathcal{K}er\,\alpha \cong \mathcal{O}_Z(1)$. Hence, from the right--most exact column of the diagram, we get
\begin{eqnarray}\label{AA}
 0 \to \mathcal{O}_Z(1)\to  T_{\mathbb{P}^{R}}|_Z \to\pi_v^*(T_{\mathbb{P}^{R-1}}|_Y)\to 0.
\end{eqnarray}

If we set $\Omega^1_Z$ the {\em cotangent sheaf} (or the {\em sheaf of K\"ahler differentials}) on $Z$,  then its dual
$\Theta_{Z} := {\mathcal Hom}(\Omega^1_{Z}, \mathcal{O}_Z) $ is not locally--free, but it is torsion free (cf. \cite{S84}) and it is called the {\em sheaf of derivations} of $\mathcal O_Z$ (when
$Z$ is smooth and irreducible, $\Omega_Z^1$ coincides with the {\em canonical bundle} whereas $\Theta_Z$ with the {\em tangent bundle}).
At last, $T^1_Z :={\mathcal Ext}^1(\Omega^1_{Z}, \mathcal{O}_Z)$ is called
the {\em first cotangent sheaf} of $Z$, which is a torsion sheaf supported on $Sing(Z)$. Since by assumption $Z$ is at most nodal, it is  either
$T^1_Z = 0$ (i.e. the zero--sheaf) when $Z$ is smooth, or $T^1_Z \cong {\mathcal O}_D$, where $D$ the set of nodes of $Z$, otherwise.
By \cite[page\,30, (1.2)]{S84}, one has the exact sequence
\begin{equation}\label{BB}
0\to \Theta_{Z} \to T_{\mathbb{P}^{R}}|_Z \to N_{Z/\mathbb{P}^{R}}\to T^1_Z \to 0.
\end{equation}
Putting together \eqref{AA} and \eqref{BB} and taking into account the map $\pi_v : Z \to Y$, one gets the following exact diagram:
\[
\begin{tikzcd}
 &   0                 \arrow{d}                 & 0                 \arrow{d}                 & 0                 \arrow{d}          & &
  \\
   & \mathcal{K}er\,\pi_{v*} \arrow{d} & \mathcal{O}_Z(1) \arrow{d}  & \mathcal{K}er\,\beta\arrow{d} &  &
  \\
  0\arrow{r} &\Theta_{Z} \arrow{r} \arrow[d,"\pi_{v*}"]  &T_{\mathbb{P}^{R}}|_Z \arrow{r}\arrow{d} &  N_{Z/\mathbb{P}^{
  R}} \arrow{r} \arrow[d,"\beta"] & T^1_Z \arrow{r} & 0
  \\
 0 \arrow{r} & \pi^*_v(T_{Y}) \arrow{r}\arrow{d} & \pi^*_v(T_{\mathbb{P}^{R-1}}|_Y) \arrow{r}\arrow{d} & \pi^*_v(N_{Y|\mathbb{P}^{R-1}})  \arrow{r}\arrow{d}  & 0   &
   \\
  & \mathcal{C}oker\, \pi_{v*} \arrow{d} & 0  & 0  &   &
  \\
 &0 &   && &
\end{tikzcd}
\]where $\beta$ is defined by the diagram. From \cite{S84}, the sequence \eqref{BB} splits in two exact sequences
\begin{eqnarray}\label{CC}
&&
0\to \Theta_{Z}\to T_{\mathbb{P}^{R}}|_Z\to N'_{Z}\to 0 \nonumber
\\
&&
0\to N'_{Z}\to   N_{Z/\mathbb{P}^{R}}\to T^1_Z \to 0, \nonumber
\end{eqnarray}
where $N'_{Z}$ is the {\em equi--singular sheaf}. Hence, the previous exact diagram gives rise to  the following:
\begin{equation}\label{diaDelta}
\begin{tikzcd}
 &   0                 \arrow{d}                 & 0                 \arrow{d}                 & 0                 \arrow{d}          &
  \\
       & \mathcal{K}er\,\pi_{v*} \arrow{d} & \mathcal{O}_Z(1) \arrow{d}  & \mathcal{K}er\,\beta' \arrow{d} & 
  \\
0 \arrow{r} &  \Theta_{Z} \arrow{r}\arrow[d,"\pi_{v*}"]   & T_{\mathbb{P}^{R}}|_Z\arrow{r}\arrow{d}  & N'_{Z} \arrow{r} \arrow[d, "\beta'"] & 0
   \\
 0 \arrow{r} & \pi^*_v(T_{Y}) \arrow{r}\arrow{d} & \pi^*_v(T_{\mathbb{P}^{R-1}}|_Y) \arrow{r}\arrow{d} & \pi^*_v(N_{Y/\mathbb{P}^{R-1}})  \arrow{r}\arrow{d}  & 0
   \\
  & \mathcal{C}oker\, \pi_{v*} \arrow{d} & 0  & 0 &
  \\
 &0 &   &&
\end{tikzcd}
\end{equation}where $\beta'$ is induced by $\beta$ from the previous diagram. By the Snake Lemma, one has therefore
\begin{eqnarray}\label{DD}
&&
0\to \mathcal{K}er\, \pi_{v*} \to \mathcal{O}_Z(1)  \to \mathcal{K}er\, \beta'\to \mathcal{C}oker\, \pi_{v*} \to 0.
\end{eqnarray}
Since $\pi_{v}^*(T_Y)$ is a line bundle on $Z$ and $\Theta_{Z}$ has generically rank $1$ on $Z$, if it were $\mathcal{K}er\, \pi_{v*} \neq 0$ then it would be a torsion sheaf, which is a contradiction
by \eqref{DD} and the fact that $\mathcal{O}_Z(1)$ is a line bundle on $Z$. Therefore \eqref{DD} gives
\begin{eqnarray}\label{DDDD}
&&
0\to \mathcal{ O}_Z(1)  \to \mathcal{K}er\, \beta'\to \mathcal{C}oker\, \pi_{v*} \to 0.
\end{eqnarray}
Since by the right--most column of diagram \eqref{diaDelta} the sheaf $\mathcal{K}er\, \beta'$ has generically rank $1$ and, by
the left--most column of diagram \eqref{diaDelta}, $\mathcal{C}oker\, \pi_{v*}$ is a torsion sheaf, from
\eqref{DDDD} it follows that $\mathcal{K}er\, \beta'$ is a line bundle whereas
$\mathcal{C}oker\, \pi_{v*} \cong \mathcal{O}_{R_{\pi_v}}$, and $R_{\pi_v}$ is the (effective, Cartier) ramification
divisor of the projection $\pi_v$. Thus, $\mathcal{K}er\, \beta'$ is a line bundle on $Z$ such that, from \eqref{DDDD},
is isomorphic to $\mathcal{O}_Z( R_{\pi_v})\otimes \mathcal{O}_Z(1)$. Therefore, the sequence \eqref{DDDD} reads as
\begin{eqnarray}\label{DDDDDD}
&&
0\to \mathcal{ O}_Z(1)  \to\mathcal{O}_Z( R_{\pi_v})\otimes \mathcal{O}_Z(1)\to\mathcal{O}_{R_{\pi_v}}\to 0.
\end{eqnarray}

From diagram \eqref{diaDelta} we deduce
\[
\begin{tikzcd}
 &   0                 \arrow{d}                 & 0                 \arrow{d}                 & 0                 \arrow{d}          &
  \\
 0  \arrow{r}   & \mathcal{O}_Z( R_{\pi_v})\otimes\mathcal{O}_Z(1) \arrow{r}\arrow{d} & \mathcal{K}er\, \beta \arrow{r}\arrow{d}  & T^1_Z\cong \mathcal{O}_D \arrow{r}\arrow[equal]{d} & 0
  \\
0 \arrow{r} &  N'_{Z} \arrow{r}\arrow[d,"\beta'"]   & N_{Z/\mathbb{P}^{R}}\arrow{r}\arrow[d,"\beta"]  & T^1_Z \arrow{r} \arrow{d} & 0
   \\
 0 \arrow{r} & \pi^*_v(N_{Y/\mathbb{P}^{R-1}}) \arrow[equal]{r}\arrow{d} & \pi^*_v(N_{Y/\mathbb{P}^{R-1}}) \arrow{r}\arrow{d} & 0 \arrow{r}\arrow{d}  & 0
   \\
  & 0 & 0  & 0 &
  \end{tikzcd}
\]By the Snake Lemma again,  $ \mathcal{K}er\, \beta =: \mathcal{L}_Z$ is a line bundle too, for which
\begin{eqnarray}\label{EE}
&&
0\to \mathcal{O}_Z( R_{\pi_v})\otimes \mathcal{O}_Z(1) \to \mathcal{L}_Z \to T_Z^1\cong\mathcal{O}_D \to 0.
\end{eqnarray} holds. In particular, by \eqref{EE} one has
$$\deg \;\mathcal{L}_Z = \deg \; \left(\mathcal{O}_Z( R_{\pi_v})\otimes \mathcal{O}_Z(1) \right) + \delta = \deg\, Z + \deg \, R_{\pi_v} + \delta$$as stated.
Moreover, from the middle--column of the above diagram, one also has 
\[
0\to \mathcal{L}_Z \to N_{Z/\mathbb{P}^{R}} \to  \pi^*_v(N_{Y/\mathbb{P}^{R-1}}) \to 0,
\]  and this concludes the first part of the statement.

At last, if $Z = X_m \subset F$ as in Proposition \ref{prop:Flam7e} and if $v$ is the vertex of the cone $F$, the projection $\pi_v$ induces a  $m$-sheeted ramified cover of the base curve $Y$
which is a hyperplane section of $F$ not passing through the vertex $v$; in this case $\delta =0$, $D= \emptyset$ and
$\mathcal{L}_{X_m} \cong \mathcal{O}_{X_m}( R_{\pi_v})\otimes \mathcal{O}_{X_m}(1)$ so \eqref{eq:Flam**} becomes
\eqref{eq:Flam***}, as stated.
\end{proof}

\vskip 15pt


\section{Superabundant components of Hilbert schemes}\label{superabundant} This section is entirely devoted to the construction of {\em superabundant} components of Hilbert schemes and to
the proof of our Main Theorem.  To do so, we will need to deal with the surjectivity of the Gaussian--Wahl map
$\Phi_{Y,{\mathcal O}_Y(1)}$ for $Y \subset \Pp^{R-1}$ as in \S\,\ref{curves} (cf.\;Claim\;\ref{cl:induction} below).

\begin{remark}\label{impo} {\rm Recall that ${\mathcal O}_Y(1) \cong {\mathcal O}_C(E)$ is of degree $e \geqslant 2\gamma -1$. Taking into account numerical assumptions as 
in Proposition \ref{prop:CLM2.1}, for $6 \leqslant \gamma \leqslant 8$, the condition $2\gamma - 1 \geqslant \gamma + 12$ cannot hold since it would give 
$\gamma \geqslant 13$, contradicting that $6 \leqslant \gamma \leqslant 8$; similarly, condition $2\gamma-1 \geqslant \gamma + 9$ does not hold for 
$\gamma=9$. On the contrary, $\gamma \geqslant 10$ ensures that $\deg {\mathcal O}_C(E) = e \geqslant 2\gamma -1 \geqslant \gamma + 9$ holds true so we can apply 
Proposition \ref{prop:CLM2.1} to the pair $(C, {\mathcal O}_C(E))$ giving rise to $Y$ to prove the next result. 
}
\end{remark}

\begin{proposition}\label{prop:2.1(2.8)Y}  Let $\gamma \geqslant 10$, $e \geqslant 2 \gamma - 1$ and $R = e - \gamma +1$ be integers. 
Let $\widehat{{\mathcal I}_{e, \gamma, R-1}}$ be the distinguished component of $\mathcal{I}_{e, \gamma, R-1}$ and let $[Y] \in \widehat{{\mathcal I}_{e, \gamma, R-1}}$ 
be general. Then: 

\noindent
$(a)$ the Gaussian-Wahl map $\Phi_{\omega_Y,{\mathcal O}_Y(1)}$ is surjective, 

\noindent
$(b)$ ${\mathcal H} (\widehat{{\mathcal I}_{e, \gamma, R-1}})$ is generically smooth of dimension
\begin{equation}\label{eq:dimHI}
\dim\, {\mathcal H} (\widehat{{\mathcal I}_{e, \gamma, R-1}}) = \lambda_{e, \gamma, R-1} + R = R (e+1) - (R-4) (\gamma-1), 
\end{equation}and 

\noindent
$(c)$ the cone $F$, corresponding to $[F] \in {\mathcal H} (\widehat{{\mathcal I}_{e, \gamma, R-1}})$ general, is unobstructed in $\Pp^R$.
\end{proposition}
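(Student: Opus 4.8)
The plan is to deduce all three statements from the machinery already assembled in \S\,\ref{Wahlmap} and \S\,\ref{curves}, the point being that the pair $(C,{\mathcal O}_C(E))$ producing a general $[Y]\in\widehat{{\mathcal I}_{e,\gamma,R-1}}$ satisfies the numerical hypotheses of Proposition \ref{prop:CLM2.1} and Proposition \ref{prop:CLM2.18}. First I would invoke Proposition \ref{prop:SernesiY}: the general $[Y]$ arises as $\phi_{|E|}(C)$ with $C$ of general moduli and ${\mathcal O}_C(E)\in{\rm Pic}^e(C)$ general, the embedding is projectively normal and linearly normal (so ${\mathcal O}_Y(1)\cong{\mathcal O}_C(E)$ is complete and $r=R-1=e-\gamma = \deg {\mathcal O}_Y(1) - g(Y)$, exactly the relation $r=d-g$ required in Proposition \ref{prop:CLM2.18}), and $h^1(Y,N_{Y/\Pp^{R-1}})=0$.

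Next I would check the degree condition. Since $\gamma\geqslant 10$, Remark \ref{impo} gives $\deg{\mathcal O}_C(E) = e\geqslant 2\gamma-1\geqslant \gamma+9$, so Proposition \ref{prop:CLM2.1} applies to the general pair $(C,{\mathcal O}_C(E))$ of general moduli and yields $\gamma_{C,{\mathcal O}_C(E)}=0$, i.e. $\Phi_{\omega_C,{\mathcal O}_C(E)}$ is surjective. Transporting this along the isomorphism $C\cong Y$ (with $\omega_C\cong\omega_Y$, ${\mathcal O}_C(E)\cong{\mathcal O}_Y(1)$) gives surjectivity of $\Phi_{\omega_Y,{\mathcal O}_Y(1)}$, which is part $(a)$. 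Alternatively, part $(a)$ is also precisely Proposition \ref{prop:CLM2.18}--(i) applied with ${\mathcal W}=\widehat{{\mathcal I}_{e,\gamma,R-1}}$, $r=R-1$, $d=e$, $g=\gamma$; either route works, and I would phrase it via Proposition \ref{prop:CLM2.1} so that the hypothesis $\gamma\geqslant 10$ is visibly used.

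For parts $(b)$ and $(c)$ I would simply apply Proposition \ref{prop:CLM2.18}--(iv) with the same substitution ${\mathcal W}=\widehat{{\mathcal I}_{e,\gamma,R-1}}$, $r=R-1$, $d=e$, $g=\gamma$: since $[Y]\in\widehat{{\mathcal I}_{e,\gamma,R-1}}$ is general in a component of ${\mathcal I}_{e,\gamma,R-1}$ and the hypotheses of Proposition \ref{prop:CLM2.1} hold, Proposition \ref{prop:CLM2.18}--(iv) gives that ${\mathcal H}(\widehat{{\mathcal I}_{e,\gamma,R-1}})$ is a generically smooth component of the Hilbert scheme of surfaces in $\Pp^R$ of degree $e$ and sectional genus $\gamma$, with the general cone $F=F_Y$ unobstructed in $\Pp^R$ — this is $(c)$ — and of dimension
\[
\dim\,{\mathcal H}(\widehat{{\mathcal I}_{e,\gamma,R-1}}) = R(e+1) - (R-4)(\gamma-1) = \lambda_{e,\gamma,R-1}+R,
\]
which is $(b)$; here I would also note that this matches the independent parametric count \eqref{eq:dimZbis}--\eqref{eq:dimZ} obtained in \S\,\ref{cones}, so the two descriptions of ${\mathcal H}(\widehat{{\mathcal I}_{e,\gamma,R-1}})$ agree.

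\textbf{Main obstacle.} There is no deep obstacle here: the proposition is essentially a bookkeeping application of the cited results of \cite{CLM96}, and the only thing requiring genuine attention is the verification that the numerical setup of \S\,\ref{curves} (namely $e\geqslant 2\gamma-1$, $R=e-\gamma+1$, general moduli, complete and non-special embedding) fits the hypotheses of Propositions \ref{prop:CLM2.1} and \ref{prop:CLM2.18} after the substitution $r=R-1$, $d=e$, $g=\gamma$ — in particular that $e\geqslant\gamma+9$, which is exactly why $\gamma\geqslant 10$ is imposed, as already observed in Remark \ref{impo}. I would make sure the identity $r=d-g$ and the bound on $d$ are both explicitly flagged so the reader sees that Proposition \ref{prop:CLM2.18} applies verbatim.
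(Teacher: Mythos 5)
Your proposal is correct and follows essentially the same route as the paper: the paper's proof likewise verifies via Remark \ref{impo} that $\gamma\geqslant 10$ and $e\geqslant 2\gamma-1$ place the general pair $(C,\mathcal{O}_C(E))$ within the hypotheses of Proposition \ref{prop:CLM2.1}, and then derives $(a)$, $(b)$, $(c)$ directly from Propositions \ref{prop:CLM2.1} and \ref{prop:CLM2.18}. Your explicit flagging of the identity $r=d-g$ and of the consistency with the parametric count \eqref{eq:dimZ} is a slightly more detailed write-up of the same argument.
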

\begin{proof} As observed in Remark \ref{impo}, $\gamma \geqslant 10$ and $e \geqslant 2\gamma -1$ imply that 
numerical assumptions of Proposition \ref{prop:CLM2.1} certainly hold. Moreover, since the pair  $(C, {\mathcal O}_C(E))$, giving rise to $Y$, is such that 
$C$ is with general moduli and ${\mathcal O}_C(E) \in {\rm Pic}^e(C)$ is general we are in position to apply Propositions  \ref{prop:CLM2.1} and \ref{prop:CLM2.18}, from which 
$(a)$, $(b)$ and $(c)$ directly follow. 
\end{proof}

\noindent
Notice that \eqref{eq:dimHI} coincides with the expression \eqref{eq:dimZ}, which has been independently 
found via the parametric description of ${\mathcal H} (\widehat{{\mathcal I}_{e, \gamma, R-1}})$ in \S\;\ref{cones}.

From Remark \ref{impo} and Proposition \ref{prop:2.1(2.8)Y}, we therefore fix from now on the following numerical assumptions:
\begin{equation}\label{eq:ass1}
\gamma \geqslant 10, \;\;\; e \geqslant 2 \gamma -1, \;\;\; R = e - \gamma + 1, \;\;\; m \geqslant 2.
\end{equation}Furthermore, to ease notation, we simply pose:
\begin{equation}\label{eq:ass2}
d:= me,\;\;\; X:= X_m, \;\;\; g:= g_m,
\end{equation} where $X_m$ and $g_m$ are as in Proposition \ref{prop:Flam7e}. 

In this set--up we have that $[X] \in {\mathcal I}_{d,g,R}$; we now show that,  
as $[F]$ varies in  ${\mathcal H} (\widehat{{\mathcal I}_{e, \gamma, R-1}})$, curves $X$ fill--up an irreducible locus in  
${\mathcal I}_{d,g,R}$ as  follows. With notation as in \S\;\ref{cones}, set first
\begin{align*}
\mathcal{U}_{e,\gamma,R}: = &\Big\{u:= \left([C], \; \mathcal{O}_C(E), \; S, \; C_1\right) \;\; | \; \; [C]\in \mathcal{M}_{\gamma} \; \text{general}, \; \mathcal{O}_C(E)\in \mathrm{Pic}^e(C) \; \text{general}, \\
\;\; &  \;\;\; \mathfrak{F} = \mathcal{O}_C\oplus  \mathcal{O}_C(-E),\; S = \Pp(\mathfrak{F}), \;  \; C_1 \in |\mathcal{O}_S(C_0 + Ef)| \; \text{general} \Big\}; 
\end{align*}by construction $\mathcal{U}_{e,\gamma,R}$ is obviously irreducible. Then, for any $m \geqslant 2$, consider
\[
\mathcal{W}_{d,g,R}: = \Big\{ (u, C_m) | \, u \in \mathcal{U}_{e,\gamma,R},\, C_m \in |\mathcal{O}_S(m(C_0 + Ef))| \;\; {\rm general} \;\; \Big\} \stackrel{\pi}{\longrightarrow} \mathcal{U}_{e,\gamma,R-1}, \;\;\; (u, C_m) \mapsto u,
\] where the natural projection map $\pi$ endows $\mathcal{W}_{d,g,R}$ with a structure of a non--empty, open dense subset of a projective--bundle over $\mathcal{U}_{e,\gamma,R}$, hence $\mathcal{W}_{d,g,R}$ is irreducible too  (recall that $d = me$ and $g = g_m$ depend on $m$).

By the very definition of $\mathcal{W}_{d,g,R}$, one has a natural {\em Hilbert morphism}
\begin{eqnarray*}
h:  \mathcal{W}_{d,g,R}  &\longrightarrow & \mathcal{I}_{d, g, R}
\\
  (u, C_m) &\mapsto & [X_m]:= [\Psi(C_m)],
\end{eqnarray*} where $\Psi$ the morphism as in Proposition \ref{prop:Flam7}, and one defines 
\begin{equation}\label{eq:Ddgr}
{\mathcal S}_{d,g,R} : = h(\mathcal{W}_{d,g,R})\subset \mathcal{I}_{d, g, R}.
\end{equation}

\begin{lemma}\label{lem:SdgR} ${\mathcal S}_{d,g,R}$ is irreducible, it has dimension
\begin{equation}\label{eq:DdgrBIS}
\dim\; {\mathcal S}_{d,g,R} = \lambda_{d,g,R} + \sigma_{d,g,R},
\end{equation}where
$$\lambda_{d,g,R} = (R+1) me  - (R-3) \left( m(\gamma-1)+\frac{m(m-1)}{2}e  \right)$$is the expected dimension of ${\mathcal I}_{d,g,R}$ as in \eqref{eq:expdimHilb},  
whereas the positive integer 
$$\sigma_{d,g,r} := (R-4) \left[(\gamma-1)(m-1) + 1 + e + \frac{m(m-3)}{2}e\right] + 4 (e+1) + e m (m-5)$$is called 
the {\em superabundance summand} of the dimension of ${\mathcal S}_{d,g,R}$. 

Furthermore, ${\mathcal S}_{d,g,R}$ is generically smooth. 
\end{lemma}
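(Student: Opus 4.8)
The plan is to exploit the maps assembled just before the statement: the irreducible parameter space $\mathcal{W}_{d,g,R}$ fibering over $\mathcal{U}_{e,\gamma,R}$, and the Hilbert morphism $h:\mathcal{W}_{d,g,R}\to\mathcal{I}_{d,g,R}$ whose image is, by definition, $\mathcal{S}_{d,g,R}$. Irreducibility of $\mathcal{S}_{d,g,R}$ is then automatic, since it is the image of an irreducible variety under a morphism; so the real content is the dimension count and generic smoothness. For the dimension I would first compute $\dim\mathcal{W}_{d,g,R}$ and then subtract the dimension of the general fibre of $h$. The dimension of $\mathcal{W}_{d,g,R}$ is the dimension of $\mathcal{U}_{e,\gamma,R}$ plus the dimension of the linear system $|\mathcal{O}_S(mC_1)|$: the former is $3\gamma-3+\gamma+(\dim|\mathcal{O}_S(C_1)|)$ (moduli of $C$, plus $\mathrm{Pic}^e(C)$, plus the choice of $C_1$), which by Proposition~\ref{prop:Flam7}(i) is $4\gamma-3+R$, while the latter is $h^0(S,\mathcal{O}_S(mC_1))-1=h^0(F,\mathcal{O}_F(m))-1=\frac{m(m+1)}{2}e-m(\gamma-1)$ by Proposition~\ref{prop:Flam7}(v). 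One should be a little careful here: different choices of $C_1$ give projectively equivalent cones, so to avoid double counting it is cleaner to argue as in Remark~\ref{rem:parametric}, writing $\mathcal{W}_{d,g,R}$ as (an open subset of) a projective bundle over $\mathcal{H}(\widehat{\mathcal{I}})\times(\text{something})$; in any case $\dim\mathcal{W}_{d,g,R}=\dim\mathcal{H}(\widehat{\mathcal{I}})+\bigl(h^0(F,\mathcal{O}_F(m))-1\bigr)+(\text{fibre of }h)$, and $\dim\mathcal{H}(\widehat{\mathcal{I}})$ is given by \eqref{eq:dimHI}.

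Next I would identify the general fibre of $h$. Over a general $[X_m]\in\mathcal{S}_{d,g,R}$, a point of the fibre records the cone $F$ and the embedding data producing $S$. But by Proposition~\ref{prop:Flam7e}(iii) the projection $\varphi_m:X_m\to Y$ is intrinsic to $X_m$ (it is the gonality/covering map, reconstructed from the line bundle $\mathcal{O}_{X_m}(1)$ and the quotient towards $\mathcal{O}_Y(1)$), so $Y$, hence the curve $C\cong Y$ and the bundle $\mathcal{O}_C(E)$, and hence $S=\mathbb{P}(\mathcal{O}_C\oplus\mathcal{O}_C(-E))$ and the cone $F=F_Y$, are all determined by $[X_m]$ up to the projective stabilizer of $F$; by Lemma~\ref{lem:step2} (applicable since $C$ has general moduli, so $\mathrm{Aut}(C)=\{Id\}$) this stabilizer $G_F$ has dimension $R+1$. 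Accounting also for the choice of vertex $v$ (a point of $\mathbb{P}^R\setminus\mathbb{P}^{R-1}$ up to the $G_F$-action, matching the bookkeeping of Remark~\ref{rem:parametric} where $\dim\Psi_e^{-1}([F])=\dim G_F+1$), one gets $\dim h^{-1}([X_m])=\dim G_F+1=R+2$ generically. Subtracting from $\dim\mathcal{W}_{d,g,R}$ and substituting the formulas above, the arithmetic should collapse to $\lambda_{d,g,R}+\sigma_{d,g,R}$ with $\lambda_{d,g,R}$ and $\sigma_{d,g,R}$ as in the statement; this is a routine but somewhat lengthy identity to verify, using $R=e-\gamma+1$ and the expression \eqref{genusg} for $g$, and $\lambda_{d,g,R}=(R+1)d-(R-3)(g-1)$ from \eqref{eq:expdimHilb}. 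That $\sigma_{d,g,R}>0$ under the hypotheses \eqref{eq:ass1} (in particular $\gamma\geqslant 10$, $e\geqslant 2\gamma-1$, $m\geqslant 2$) is a separate, elementary inequality to check.

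For generic smoothness of $\mathcal{S}_{d,g,R}$ I would use the relation between $\mathcal{S}_{d,g,R}$ and $\mathcal{H}(\widehat{\mathcal{I}})$: since $\mathcal{H}(\widehat{\mathcal{I}})$ is generically smooth by Proposition~\ref{prop:2.1(2.8)Y}(b) and the general cone $F$ is unobstructed in $\mathbb{P}^R$ by Proposition~\ref{prop:2.1(2.8)Y}(c), and since $X_m$ is cut on $F$ by a general hypersurface of degree $m$ (Proposition~\ref{prop:Flam7e}(ii)), one can propagate unobstructedness: the family $\mathcal{W}_{d,g,R}$ is smooth (it is an open set in a projective bundle over $\mathcal{U}_{e,\gamma,R}$, which is smooth because $\mathcal{M}_\gamma^0$ and the universal Picard variety are smooth, cf.\ Remark~\ref{rem:parametric}), and $h$ has equidimensional fibres over the general point, so $\mathcal{S}_{d,g,R}$ is smooth along a dense open subset provided the differential of $h$ has constant rank there — equivalently, provided $T_{[X_m]}\mathcal{S}_{d,g,R}$ has the computed dimension. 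The clean way is to observe that $\dim\mathcal{S}_{d,g,R}\leqslant\dim_{[X_m]}\mathcal{I}_{d,g,R}\leqslant h^0(X_m,N_{X_m/\mathbb{P}^R})$ and that the reverse inequality on the computed value will follow once the tangent space is pinned down — but that tangent-space estimate is precisely what the deeper part of the paper (Claim~\ref{cl:induction}, using surjectivity of the Gaussian–Wahl maps) is for, so at this stage I would only assert generic smoothness of $\mathcal{S}_{d,g,R}$ as a subscheme (i.e.\ that the open set where $h$ is smooth onto its image is dense), deferring the identification of $\mathcal{S}_{d,g,R}$ with a full component of $\mathcal{I}_{d,g,R}$ — and hence the superabundance claim of the Main Theorem — to the subsequent argument.

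\textbf{Main obstacle.} I expect the dimension bookkeeping of the fibre of $h$ — disentangling the choices of $C_1$, of the vertex $v$, and the action of $G_F$, so as to neither over- nor under-count — to be the delicate point; the genus formula \eqref{genusg} and the identity $\dim\mathcal{S}_{d,g,R}=\lambda_{d,g,R}+\sigma_{d,g,R}$ then reduce to a mechanical, if tedious, substitution. Generic smoothness of $\mathcal{W}_{d,g,R}$ and hence of $\mathcal{S}_{d,g,R}$ along a dense open set is comparatively soft, relying only on smoothness of the moduli-theoretic construction in Remark~\ref{rem:parametric}; the genuinely hard input, surjectivity of $\Phi_{\omega_Y,\mathcal{O}_Y(1)}$ and the consequent normal-bundle computation showing $\mathcal{S}_{d,g,R}$ is an entire component of $\mathcal{I}_{d,g,R}$, is postponed to the proof of the Main Theorem.
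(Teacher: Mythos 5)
Your treatment of irreducibility and of the dimension count is essentially the paper's: irreducibility is automatic from the construction, and the dimension is obtained as $\dim\mathcal{H}(\widehat{\mathcal{I}_{e,\gamma,R-1}})+\dim|\mathcal{O}_F(m)|$ --- your fibre-of-$h$ bookkeeping, once untangled, reproduces exactly \eqref{eq:dimHI} together with Proposition \ref{prop:Flam7}(v) --- followed by the arithmetic identity and the positivity check for $\sigma_{d,g,R}$. Both you and the paper tacitly use that a general $X_m$ determines the cone $F$ containing it, so that the assignment $(F,X_m)\mapsto [X_m]$ has finite general fibre; your worry about over/under-counting the vertex and $G_F$ is already absorbed into $\dim\mathcal{H}(\widehat{\mathcal{I}_{e,\gamma,R-1}})$ via Lemma \ref{lem:step2} and \eqref{eq:dimZbis}.

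The gap is in the last assertion of the Lemma. The ``generic smoothness'' claimed there is not the (vacuous, in characteristic $0$) statement that the reduced image $h(\mathcal{W}_{d,g,R})$ is generically smooth, nor does it follow from smoothness of $\mathcal{W}_{d,g,R}$ and constancy of the fibre dimension of $h$: those facts say nothing about the scheme structure $\mathcal{S}_{d,g,R}$ inherits from $\mathcal{I}_{d,g,R}$. What the Lemma asserts --- and what the proof of the Main Theorem then merely quotes --- is the equality $h^0(X,N_{X/\mathbb{P}^R})=\dim\mathcal{S}_{d,g,R}$ for $[X]\in\mathcal{S}_{d,g,R}$ general, i.e.\ \eqref{eq:tangentSdgR}. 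You have explicitly deferred exactly this tangent-space computation to ``the subsequent argument'', but in the paper it is the body of the proof of this very Lemma: Claim \ref{cl:calcoloparz} uses the exact sequence \eqref{eq:Flam***} and the splitting $\varphi_*\mathcal{O}_X=\mathcal{O}_Y\oplus\mathcal{T}^{\vee}_{\varphi}$ to reduce everything to $h^0(Y,N_{Y/\mathbb{P}^{R-1}}\otimes\mathcal{T}^{\vee}_{\varphi})$, and Claim \ref{cl:induction} shows this number equals $R$ by induction on $m$, degenerating $X_m$ to the nodal curve $X_{m-1}\cup Y'$, applying Proposition \ref{prop:split} to the pinched cover and Lemma \ref{lem:gradi} to the reducible curve, and feeding in the Gaussian--Wahl vanishings of Proposition \ref{prop:CLM2.18}(ii)--(iii) (which is where $\gamma\geqslant 10$ enters). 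Without supplying these two claims your argument establishes only irreducibility and the dimension formula, not the full statement; once they are in place, all that remains for the Main Theorem is the one-line observation that \eqref{eq:tangentSdgR} forces $\mathcal{S}_{d,g,R}$ to be dense in a single, generically smooth, superabundant component.
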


\begin{proof} By construction of ${\mathcal S}_{d,g,R}$, it is irreducible and 
$$\dim\; {\mathcal S}_{d,g,R} = \dim\; {\mathcal H} (\widehat{{\mathcal I}_{e, \gamma, R-1}}) + \dim\;|\mathcal{O}_F(m)|,$$where $[F] \in {\mathcal H}(\widehat{{\mathcal I}_{e, \gamma, R-1}})$ is general. 
Thus, from \eqref{eq:dimHI} (equivalently, from \eqref{eq:dimZ}) and from Proposition \ref{prop:Flam7} (v), the latter reads
\begin{equation}\label{eq:dimDdgr}
\begin{array}{ccl}
\dim\; {\mathcal S}_{d,g,R} & = & R(e+1) - (R-4) (\gamma-1) + \frac{m(m+1)}{2}e -m(\gamma-1) = \\
& = &  \lambda_{e,\gamma,R-1} \, + R + \frac{m(m+1)}{2}e -m(\gamma-1).
\end{array}
\end{equation}
Taking into account \eqref{eq:expdimHilb} which, in our notation, reads
$$
\lambda_{d,g,R} = (R+1) me  - (R-3) \left( m(\gamma-1)+\frac{m(m-1)}{2}e  \right),
$$ 
to prove the first part of the statement it suffices to showing that $\dim\; {\mathcal S}_{d,g,R} - \lambda_{d,g,R} = \sigma_{d,g,r}$ and that the latter integer is positive. 

To do so, observe that
$$\dim\; {\mathcal S}_{d,g,R} - \lambda_{d,g,R} = R(e+1) - (R-4) (\gamma-1) + \frac{m(m+1)}{2}e -m(\gamma-1) - \left[ (R+1) me  - (R-3) \left( m(\gamma-1)+\frac{m(m-1)}{2}e  \right)\right]= $$
$$= R\left(\gamma (m-1) - m + 2 +e\right) + Re \frac{m(m-3)}{2} - 4 (\gamma-1) (m-1) - e m (m-1) = $$
$$=(R-4) \left[(\gamma-1)(m-1) + 1 + e + \frac{m(m-3)}{2}e\right] + 4 (e+1) + e m (m-5).$$Notice that, since $R = e - \gamma +1$, $e \geqslant 2\gamma-1$ and $\gamma \geqslant 10$, then
$R-4 \geqslant 6$; moreover, since $m \geqslant 2$, the summands in square--parentheses add--up to a positive integer: the statement is clear for $m \geqslant 3$, whereas for $m=2$
one has $\left[(\gamma-1)(m-1) + 1 + e + \frac{m(m-3)}{2}e\right] = \gamma \geqslant 10$. Concerning the summand $4(e+1)$, in our assumptions it is $4 (e+1) \geqslant 8 \gamma \geqslant 80$.  The last summand $e m(m-5)$ is non--negative for $m \geqslant 5$, whereas for $m =2, 3,4$ it is, respectively, $-6e, -6e, -4e$; in all the latter three sporadic cases, the negativity  of the summand $e m(m-5)$ does not affect the positivity  of the total expression.

The previous computations  show that
$$\dim\; {\mathcal S}_{d,g,R} - \lambda_{d,g,R} = (R-4) \left[(\gamma-1)(m-1) + 1 + e + \frac{m(m-3)}{2}e\right] + 4 (e+1) + e m (m-5) = \sigma_{d,g,R}$$and the first part of the statement is proved.

Concerning the generic smoothness of ${\mathcal S}_{d,g,R}$, we have first the following: 

\begin{claim}\label{cl:calcoloparz} For $[X] \in {\mathcal S}_{d,g,R} $ general, one has
$$h^0(X, N_{X/\Pp^R}) =  \lambda_{e,\gamma,R-1} + h^0(Y, N_{Y/\mathbb{P}^{R-1}} \otimes \mathcal{T}^{\vee}_{\varphi}) + \frac{m(m+1)}{2} e - m(\gamma-1),$$where
$\mathcal{T}^{\vee}_{\varphi}$ is the Tschirnhausen bundle associated to the degree--$m$ covering map $\varphi : X \to Y$ induced by the
projection $\pi_v$ from the vertex $v$ of the cone $F$ as in diagram \eqref{eq:diagPaola}. 
\end{claim}
\begin{proof} [Proof of Claim \ref{cl:calcoloparz}] Consider the exact sequence \eqref{eq:Flam***} in Lemma \ref{lem:gradi} which, in the present notation, reads
$$0 \to {\mathcal O}_{X} (R_{\pi_v}) \otimes {\mathcal O}_{X}(1) \to N_{X/\mathbb{P}^R} \to \pi^*_v(N_{Y/\mathbb{P}^{R-1}}) \to 0.$$
From Proposition
\ref{prop:Flam7e} (iii), the degree--$m$ covering map $\varphi : X \to Y$ is induced by the projection $\pi_v$ from the vertex of the cone $F$ and, by
\eqref{eq:ramifdiv}, we have ${\mathcal O}_X (R_{\pi_v}) = {\mathcal O}_X (R_{\varphi}) \cong {\mathcal O}_X(m-1)$. Therefore, the previous exact sequence gives
$$0 \to {\mathcal O}_{X} (m) \to N_{X/\mathbb{P}^R} \to \varphi^*(N_{Y/\mathbb{P}^{R-1}}) \to 0.$$From Proposition \ref{prop:Flam7e} (v), ${\mathcal O}_{X} (m)$ is non--special
on $X$, so
$$h^0(X, N_{X/\mathbb{P}^R}) = h^0(X, \varphi^*(N_{Y/\mathbb{P}^{R-1}})) + h^0(X,{\mathcal O}_{X} (m)) .$$
Since $\varphi$ is a finite morphism, using Leray's isomorphism and projection formula, we get
$$h^0(X, \varphi^*(N_{Y/\mathbb{P}^{R-1}})) = h^0(Y, N_{Y/\mathbb{P}^{R-1}} \otimes \varphi_* \; \mathcal O_X).$$Moreover, from \S\,\ref{Coverings}, one has
$\varphi_*\mathcal{O}_X=\mathcal{O}_Y\oplus \mathcal{T}^{\vee}_{\varphi}$, where $\mathcal{T}^{\vee}_{\varphi}$ the Tschirnhausen bundle associated to $\varphi$. Thus,
$$h^0(Y, N_{Y/\mathbb{P}^{R-1}} \otimes \varphi_* \; \mathcal O_X) = h^0(Y, N_{Y/\mathbb{P}^{R-1}}) + h^0(Y, N_{Y/\mathbb{P}^{R-1}} \otimes \mathcal{T}^{\vee}_{\varphi}).$$

To sum--up, one has
\begin{equation}\label{eq:h0normale}
h^0(X, N_{X/\mathbb{P}^R}) = h^0(Y, N_{Y/\mathbb{P}^{R-1}}) + h^0(Y, N_{Y/\mathbb{P}^{R-1}} \otimes \mathcal{T}^{\vee}_{\varphi}) + h^0(X,{\mathcal O}_{X} (m)).
\end{equation}
By \eqref{eq:h0m} with $j=m$, one has
$$
h^0(X,{\mathcal O}_{X} (m)) = \frac{m(m+1)}{2} e - m(\gamma-1).
$$
From \eqref{eq:expdimHilb} and Corollary \ref{cor:Sernesi}, it follows that
$$h^0(Y, N_{Y/\mathbb{P}^{R-1}}) = \lambda_{e, \gamma, R-1},$$since $Y$ corresponds to a general point in the distinguished component $\widehat{\mathcal I_{e,\gamma,R-1}}$.
\end{proof}

To conclude that ${\mathcal S}_{d,g,R}$ is generically smooth, we are left with the following: 

\begin{claim}\label{cl:induction} For any $m \geqslant 2$, one has
\begin{equation}\label{eq:induction}
h^0(Y, N_{Y/\mathbb{P}^{R-1}} \otimes \mathcal{T}^{\vee}_{\varphi}) = R.
\end{equation}
\end{claim}

\begin{proof}[Proof of Claim \ref{cl:induction}] To prove the statement, we will use an inductive approach.

Assume first $m=2$, so $X= X_2$ and $\varphi := \varphi_2: X \to Y$ is the double cover of the curve $Y$, as in Proposition \ref{prop:Flam7e} (iii). In this case, the Tschirnhausen bundle $\mathcal{T}^{\vee}_{\varphi}$ is a line bundle on $Y$ which, from \eqref{eq:branch} and \eqref{eq:ramifdiv}, equals ${\mathcal O}_Y(-E) \cong {\mathcal O}_Y(-1)$.
Since $\gamma \geqslant 10$ and $e \geqslant 2\gamma -1$, assumptions of Propositions \ref{prop:CLM2.1} are satisfied. Therefore, from Proposition \ref{prop:CLM2.18} (ii), we have
$h^0( Y, N_{Y/\mathbb{P}^{R-1}} \otimes \mathcal O_Y(-1)) = R$ and \eqref{eq:induction} holds true in this case.

Take now $m \geqslant 3$ and assume that \eqref{eq:induction} holds for a degree--$(m-1)$ covering map $\varphi:= \varphi_{m-1} : X := X_{m-1} \to Y$, where $X_{m-1} \in |{\mathcal O}_F(m-1)|$ general 
as in Proposition \ref{prop:Flam7e}. To ease notation, the associated Tschirnhausen bundle $\mathcal{T}^{\vee}_{\varphi}$ will be simply denoted by $\mathcal{T}^{\vee}_{m-1}$. 

Let $Y' \in |\mathcal O_F(1)|$ be general and consider the projective, connected, non--degenerate reducible curve $Z := X \cup Y' \subset F$ which, as a Cartier divisor on $F$, 
is such that $Z \in |\mathcal O_F(m)|$. The singular locus of $Z$ is $D := X \cap Y' $ and consists of $\delta$ nodes, where $\delta:= (m-1) H^2 = (m-1) e$, $H$ denoting the 
hyperplane section of $F$.  As in \S\;\ref{Coverings}, the curve $Z$ is endowed with a natural degree--$m$ covering map $\psi: Z \to Y$, whose Tschirnhausen bundle $\mathcal{T}^{\vee}_{\psi}$ on $Y$ will be simply denoted
by $\mathcal{T}^{\vee}_{m}$.

From Proposition \ref{prop:split}, passing to duals, we get the exact sequence
$$0 \to \mathcal{O}_Y(-D)\to \mathcal{T}_{m}^{\vee} \to \mathcal{T}_{m-1}^{\vee} \to 0$$of vector bundles on $Y$. Tensoring this exact sequence with $N_{Y/\mathbb{P}^{R-1}}$ gives
\begin{equation}\label{eq:bastaaa}
 0 \to N_{Y/\mathbb{P}^{R-1}}\otimes\mathcal{O}_Y(-D)\to N_{Y/\mathbb{P}^{R-1}}\otimes\mathcal{T}_{m}^{\vee} \to N_{Y/\mathbb{P}^{R-1}}\otimes \mathcal{T}_{m-1}^{\vee} \to   0.
 \end{equation}By induction, since $m-1 \geqslant 2$, one has $h^0(Y, N_{Y/\mathbb{P}^{R-1}}\otimes \mathcal{T}_{m-1}^{\vee} ) = R$.
Moreover, since $D$ is cut--out on the irreducible component $Y'$ by a hypersurface of degree $m-1$ in $\Pp^R$ and since $Y' \cong Y$, then 
${\mathcal O}_Y(D) \cong \mathcal O_Y(m-1)$ and one has
$$h^0(Y, N_{Y/\mathbb{P}^{R-1}}\otimes \mathcal{O}_Y(-D)) =  h^0(Y,N_{Y/\mathbb{P}^{R-1}}\otimes \mathcal{O}_Y(-(m-1)) ) =0,$$as it follows from Proposition \ref{prop:CLM2.18} (iii) and from the fact that $m-1 \geqslant 2$.

By \eqref{eq:bastaaa}, we deduce that $H^0(Y, N_{Y/\mathbb{P}^{R-1}}\otimes\mathcal{T}_{m}^{\vee})$ injects into $H^0(Y, N_{Y/\mathbb{P}^{R-1}}\otimes\mathcal{T}_{m-1}^{\vee})$, so in particular
\begin{equation}\label{eq:ineq}
h^0(Y, N_{Y/\mathbb{P}^{R-1}}\otimes\mathcal{T}_{m}^{\vee}) \leqslant R.
\end{equation} On the other hand since $[Z] \in \overline{{\mathcal S}}_{d,g,R}$, where $\overline{{\mathcal S}}_{d,g,R}$ denotes the closure in $\mathcal I_{d,g,R}$ of
${\mathcal S}_{d,g,R}$, by \eqref{eq:dimhilb} one must have
\begin{equation}\label{eq:chestrazio}
h^0(Z, N_{Z/\mathbb{P}^{R}}) = \dim\; T_{[Z]} (\mathcal I_{d,g,R}) \geqslant \dim\; {\mathcal S}_{d,g,R} =  \lambda_{e,\gamma,R-1} \, + R + \frac{m(m+1)}{2}e -m(\gamma-1),
\end{equation}as it follows from \eqref{eq:dimDdgr}. Since $Z$ satisfies assumptions as in Lemma \ref{lem:gradi}, we can consider the exact sequence \eqref{eq:Flam**}. The line bundle $\mathcal{L}_Z$ therein has
degree $$\deg \; \mathcal{L}_Z = \deg\; Z + \deg\; R_{\psi} + \delta = m e + \deg\; R_{\psi}  + (m-1) e.$$By definition of
$\psi : Z \to Y$, the ramification of this map is supported on the irreducible component $X = X_{m-1}$ of $Z$, namely
$R_{\psi} = R_{\varphi_{m-1}}$ where $\varphi_{m-1} : X \to Y$. From \eqref{eq:ramifdiv} we therefore have
$\deg \; R_{\varphi_{m-1}} = (m-1)^2 e$, so
$$\deg \; \mathcal{L}_Z = m e + (m-1)^2 e  + (m-1) e = m^2e.$$
Hence, $\mathcal{L}_Z$ is a non--special line bundle on $Z$, $Z$ being a reduced, connected and nodal curve of arithmetic genus $p_a(Z) = g = g_m$ as in \eqref{genusg} (the non--speciality of 
$\mathcal{L}_Z$ can be proved by applying the same numerical computation as in the proof of
Proposition \ref{prop:Flam7e} (v), replacing the canonical bundle with the dualizing sheaf $\omega_Z$).
Thus, from \eqref{eq:Flam**} one gets
$$h^0(Z, N_{Z/\mathbb{P}^{R}}) =  h^0(Z, \psi^*(N_{Y/\Pp^{R-1}})) + h^0(Z, \mathcal{L}_Z)$$where
$$h^0(Z, \mathcal{L}_Z) = \chi(Z, \mathcal{L}_Z) = \frac{m(m+1)}{2} e - m(\gamma-1),$$both equality following from the non--speciality of $\mathcal{L}_Z$.
As for the summand $h^0(Z, \psi^*(N_{Y/\Pp^{R-1}}))$, we can apply
projection formula and Leray's isomorphism, which gives
$$h^0(Z, \psi^*(N_{Y/\Pp^{R-1}})) = h^0(Y, N_{Y/\Pp^{R-1}}) +   h^0(Y, N_{Y/\Pp^{R-1}} \otimes \mathcal{T}_{m}^{\vee}).$$Since
$h^0(Y, N_{Y/\mathbb{P}^{R-1}}) = \lambda_{e,\gamma,R-1}$ as $[Y] \in \widehat{\mathcal{I}_{e,\gamma,R-1}}$ is general (cf. Corollary \ref{cor:Sernesi}), then
comparing with \eqref{eq:chestrazio} we deduce that $h^0(Y, N_{Y/\mathbb{P}^{R-1}} \otimes \mathcal{T}_{m}^{\vee}) \geqslant R$.
Thus, using the previous inequality \eqref{eq:ineq}, we get $h^0(Y, N_{Y/\mathbb{P}^{R-1}}\otimes\mathcal{T}_{m}^{\vee}) =R$.

By semi--continuity on the general element $[X_m] \in  {\mathcal S}_{d,g,R}$, with its degree--$m$ covering map
$\varphi_m: X_m \to Y$  and its associated Tschirnhausen bundle $\mathcal{T}_{\varphi_m}^{\vee}$, we deduce that
$$h^0(Y, N_{Y/\mathbb{P}^{R-1}}\otimes \mathcal{T}_{\varphi_m}^{\vee}) \leqslant R.$$On the other hand, replacing $Z$ with $X_m$ in the previous computations, since 
$$h^0(X_m, N_{X_m /\mathbb{P}^{R}}) = \dim\; T_{[X_m]} (\mathcal I_{d,g,R}) \geqslant \dim\; {\mathcal S}_{d,g,R} =  \lambda_{e,\gamma,R-1} \, + R + \frac{m(m+1)}{2}e -m(\gamma-1),$$
one can conclude  by applying \eqref{eq:Flam***}, with ${\mathcal O}_{X_m} (R_{\varphi_m}) \cong {\mathcal O}_{X_m}(m-1)$ as in \eqref{eq:ramifdiv}, and reasoning as we did for $Z$ above. 
\end{proof}

The previous computations show that, for $[X] \in  {\mathcal S}_{d,g,R}$ general, one has 
\begin{equation}\label{eq:tangentSdgR}
\dim {\mathcal S}_{d,g,R} = \lambda_{d,g,R} + \sigma_{d,g,R} = \dim T_{[X]} ({\mathcal S}_{d,g,R}) = T_{[X]} (\mathcal I_{d,g,R}),
\end{equation} which therefore implies that ${\mathcal S}_{d,g,R}$ is generically smooth. 
\end{proof}

We are finally in position to prove our Main Theorem.

\begin{proof} [Proof of Main Theorem] The first part of Lemma \ref{lem:SdgR} ensures that any irreducible component of $\mathcal I_{d,g,R}$ containing 
${\mathcal S}_{d,g,R}$ has to be {\em superabundant}, having dimension at least $\dim {\mathcal S}_{d,g,R} = \lambda_{d,g,R} + \sigma_{d,g,R}$. On the other hand, 
the proofs of Claims \ref{cl:calcoloparz} and \ref{cl:induction} show that ${\mathcal S}_{d,g,R}$ is contained 
in a unique component of $\mathcal I_{d,g,R}$, more precisely it fills--up an open, dense subset
of an irreducible component of $\mathcal I_{d,g,R}$ which is generically smooth, superabundant, of dimension
$\lambda_{d,g,R} + \sigma_{d,g,R}$. Indeed, by \eqref{eq:tangentSdgR}, for $[X] \in {\mathcal S}_{d,g,R}$ general we have that 
$$\dim T_{[X]} ({\mathcal S}_{d,g,R}) = h^0(X, N_{X/\Pp^R}) = \dim T_{[X]} ({\mathcal I}_{d,g,R}) = \dim {\mathcal S}_{d,g,R} = \lambda_{d,g,R} + \sigma_{d,g,R}.$$
\end{proof}

\begin{remark} {\normalfont It is clear from the construction that $\mathcal{S}_{d, g, R}$ lies in
a component of $\mathcal{I}_{d, g, R}$ which cannot dominate $\mathcal M_g$. Indeed, the modular morphism of such a component maps to the {\em Hurwitz space} $\mathcal{H}_{\gamma,m,g}$ parametrizing isomorphism classes of genus--$g$ curves arising as $m$-sheeted, ramified covers of irrational curves of
genus $\gamma$.
}
\end{remark}


\end{document}